\theoremstyle{plain}
\newtheorem{thm}{Theorem}[section]
\newtheorem{lem}[thm]{Lemma}
\newtheorem{assu}[thm]{Assumption}
\theoremstyle{definition}
\newtheorem{defn}[thm]{Definition}
\theoremstyle{remark}
\newtheorem{rem}[thm]{Remark}
\newcommand{\RR}{\mathbb{R}}
\newcommand{\EE}{\mathbb{E}}
\newcommand{\PP}{\mathbb{P}}
\newcommand{\MCL}{\mathcal{L}}
\newcommand{\MCP}{\mathcal{P}}
\newcommand{\ud}{\,\mathrm{d}}
\newcommand{\mc}[1]{\mathcal{#1}}
\newcommand{\mb}[1]{\mathbb{#1}}
\newcommand{\half}{\frac{1}{2}}
\newcommand{\bR}{\mathbb {R}}
\newcommand{\transpose}{^{\operatorname{T}}}
\newcommand{\rmd}{\,\mathrm{d}}
\newcommand{\rmD}{\mathrm{D}}
\newcommand{\inv}{^{\raisebox{.2ex}{$\scriptscriptstyle-1$}}}
\title{Learning High-Dimensional McKean-Vlasov Forward-Backward Stochastic Differential Equations with General Distribution Dependence}
\author{Jiequn Han\thanks{Center for Computational Mathematics, Flatiron Institute, New York, NY 10010, \em{jiequnhan@gmail.com}.} \and Ruimeng Hu\thanks{Department of Mathematics, and Department of Statistics and Applied Probability, University of California, Santa Barbara, CA 93106-3080, {\em rhu@ucsb.edu}.} \and  Jihao Long\thanks{The Program in Applied and Computational Mathematics, Princeton University, Princeton, NJ 08544-1000, \em{jihaol@princeton.edu}.}}
\date{\today}
\begin{document}

\maketitle

\begin{abstract}
One of the core problems in mean-field control and mean-field games is to solve the corresponding McKean-Vlasov forward-backward stochastic differential equations (MV-FBSDEs). Most existing methods are tailored to special cases in which the mean-field interaction only depends on expectation or other moments and thus is inadequate to solve problems when the mean-field interaction has full distribution dependence.

In this paper, we propose a novel deep learning method for computing MV-FBSDEs with a general form of mean-field interactions. Specifically, built on fictitious play, we recast the problem into repeatedly solving standard FBSDEs with explicit coefficient functions. These coefficient functions are used to approximate the MV-FBSDEs' model coefficients with full distribution dependence, and are updated by solving another supervising learning problem using training data simulated from the last iteration's FBSDE solutions. We use deep neural networks to solve standard BSDEs and approximate coefficient functions in order to solve high-dimensional MV-FBSDEs.
Under proper assumptions on the learned functions, we prove that the convergence of the proposed method is free of the curse of dimensionality (CoD) by using a class of integral probability metrics (IPMs) previously developed in [Han, Hu and Long, arXiv:2104.12036]. The proved theorem shows the advantage of the method in high dimensions. We present the numerical performance in high-dimensional MV-FBSDE problems, including a mean-field game example of the well-known Cucker-Smale model whose cost depends on the full distribution of the forward process. 

\end{abstract}

\noindent\textbf{Keywords:}
	McKean-Vlasov FBSDE, fictitious play, mean-field games, Deep BSDE, maximum mean discrepancy, convergence analysis.

\section{Introduction}

The McKean-Vlasov forward-backward stochastic differential equations (MV-FBSDEs) arise as natural formulations for mean-field games (MFGs) and mean-field control (MFC) problems. Initially introduced by Lasry-Lions \cite{LaLi1:2006,LaLi2:2006, LaLi:2007} and Huang-Malhame-Caines \cite{HuMaCa:06,HuCaMa:07}, MFG studies the strategic decision-making by a continuum of homogeneous agents where each of them aims to maximize her/his own goal. Since each agent is infinitesimal, the law of states processes is considered fixed while an agent optimizes her cost functional. Therefore, it is merely a standard optimal control problem plus a fixed point argument (that the distribution of the solution is equal to the distribution one starts from). From a slightly different perspective, MFC analyzes the optimal control of stochastic differential equations (SDEs) of the McKean-Vlasov type, which can be interpreted as the limiting regime of a cooperative game among large population agents. This objective adds an extra optimization layer to the fixed point, that is, the law of state processes changes as one optimizes her goal. Despite their differences, both problems can be solved via analyzing the MV-FBSDEs using the Pontryagin maximum principle \cite{carmona2013probabilistic,CaDe1:17}, and both solutions serve as approximate equilibrium strategies for large populations of individuals whose interactions and objective functions are of the mean-field type. To further explain their differences and relations, we refer readers to \cite[Section~6]{CaDe1:17}.

In this paper, we are interested in developing efficient deep learning methods and analyzing the numerical convergence for the following MV-FBSDEs,
\begin{equation}\label{def:MV-FBSDE}
\hspace{-5pt}\begin{dcases}
  \mathrm{d} X_t = b(t, X_t, Y_t, Z_t, \MCL(X_t, Y_t, Z_t)) \ud t + \sigma(t, X_t, Y_t, Z_t, \MCL(X_t, Y_t, Z_t)) \ud W_t, \, X_0 = x_0,  \\
  \mathrm{d} Y_t = -h(t, X_t, Y_t, Z_t, \MCL(X_t, Y_t, Z_t)) \ud t + Z_t \ud W_t, \quad Y_T = g(X_T, \MCL(X_T)),
\end{dcases}
\end{equation}
on a finite horizon $[0,T]$, where $\MCL(\cdot)$ denotes the marginal law of a process, and $(b, \sigma, h, g)$ are measurable functions of compatible dimensions, with specific forms detailed in Section~\ref{sec:algorithm}. Particularly, we shall focus on computing MV-FBSDEs in high dimensions with a general form of mean-field interactions, while most existing methods only deal with special cases when the mean-field interaction is described via expectations or other moments.

Theoretically, existence and uniqueness results for MV-FBSDEs have been recently developed \cite{carmona2013mean,carmona2013probabilistic,carmona2015forward}, mostly tackled by a compactness argument and fixed point theorems. In particular, when $\sigma$ is free of $Z_t$ and $\mc{L}(Z_t)$, under suitable conditions \cite[Theorem~4.29, Remark~4.30]{CaDe1:17}, the MV-FBSDEs \eqref{def:MV-FBSDE} admit a solution with a decoupling field
\begin{equation}\label{def:decouplingfield}
    Y_t = u(t, X_t), \quad Z_t = v(t, X_t) 
\end{equation}
where $v(t, x) = \partial_x u(t, x) \sigma(t, x, u(t, x), (I_d, u(t, \cdot))(\mc{L}(X_t)))$. Several numerical algorithms have been designed for solving MV-FBSDEs, including a recursive local Picard iteration method \cite{chassagneux2019numerical}, cubature based algorithm for the decoupled setting \cite{de2015cubature}, and deep learning methods  \cite{carmona2019convergence,germain2019numerical}. Most of the work, with or without analysis, only present numerical examples of a special type of mean-field interaction, that is, $(b, \sigma, h, g)$ depends on $\mc{L}$ only through its expectation. On a related topic, MFGs and MFC problems have also been solved using partial differential equations and using neural networks for high-dimensional problems; for instance, see \cite{achdou2020mean,ruthotto2020machine,reisinger2021fast,achdou2022income}.

The work most closely related to ours is \cite{germain2019numerical}. It solves the MV-FBSDEs using deep learning, and the dependence of the $(b, \sigma, h, g)$ with respect to $\mc{L}(X_t, Y_t, Z_t)$ are solely in terms of expectations of the form $(\EE[\varphi_1(X_t)], \EE[\varphi_2(Y_t)], \EE[\varphi_3(Z_t)])$ with some continuous functions $\varphi_i$. To do so, one is essentially estimating some equilibrium numbers $\int_y \varphi_i(y) \,\mu(\mathrm{d}y)$ where $\mu$ is the law of the MV-FBSDE solution.  Beyond mere dependency on moments, which is the scope of this paper, the goal is to learn equilibrium functions that depend on the distribution in a nonlinear way, e.g., functions in $(t,x)$ of the form $\int_y \varphi(t, x, y) \, \mu(\mathrm{d}y)$. This presents greater challenges in terms of both algorithm design and the method employed for representing these quantities.

Here we propose a novel deep learning method to solve high-dimensional MV-FBSDEs, leveraging the idea of fictitious play \cite{brown1949some,brown1951iterative} and 
the existing machine learning BSDE solvers \cite{weinan2017deep,han2018solving,han2020convergence1,hure2020deep}, with the capability of dealing with general mean-field dependence (not only moments). The main contributions are:
\begin{enumerate}
    \item 
    Motivated by the fixed-point nature of MV-FBSDEs, we use fictitious play and solve
    \begin{equation}\label{def:MV-FBSDE-fp}
\begin{dcases}
  \ud X_t^k = b(t, \Theta_t^k, \MCL(\Theta_t^{k-1})) \ud t + \sigma(t, \Theta_t^k, \MCL(\Theta_t^{k-1})) \ud W_t, \; X_0 = x_0,  \\
  \ud Y_t^k = -h(t, \Theta_t^{k}, \MCL(\Theta_t^{k-1})) \ud t + Z_t^k \ud W_t, \; Y_T = g(X_T^k, \MCL(X_T^{k-1})),
\end{dcases}
\end{equation}
among solutions in the form of decoupling fields (that is, search for some deterministic functions $u^k, v^k$ which are viewed as approximations to $u, v$ defined in \eqref{def:decouplingfield} such that
$Y_t^k = u^k(t, X_t^k), Z_t^k = v^k(t, X_t^k)$), iteratively via existing BSDE solvers, where $\Theta^k = (X^k, Y^k, Z^k)$ is the short notation of the solution to \eqref{def:MV-FBSDE-fp}, and is understood as the $k^{th}$ iteration approximation of $(X, Y, Z)$ in \eqref{def:MV-FBSDE}. To speed up the simulation of $(X^k, Y^k)$ for the BSDE solver, we propose to use neural networks to learn the maps $(t, X_t, Y_t, Z_t) \mapsto (b, \sigma, h, g)$ by supervised learning when they depend fully on any distribution. Numerically, the function values (also known as the ``labels'') are approximated via replacing $\mc{L}(\cdot)$ by the corresponding empirical distribution of the solution from the last iteration and the learned maps are updated after each iteration. 
    
    \item We prove the convergence for the proposed algorithm. Using the integral probability metrics proposed in \cite{han2021class}, the difference between the learned process and the solution can be sufficiently small and free of the curse of dimensionality, 
    subject to sufficient smoothness of the learned functions, sufficient iterations of fictitious play, and sufficiently small time steps. 
    
    \item We introduce a mean-field game of the Cucker-Smale model where the cost depends on the full distribution of the forward process. We provide numerical benchmarks and hope this will facilitate the further study of numerical algorithms for MV-FBSDEs. 
\end{enumerate}

The rest of the paper is organized as follows. In Section~\ref{sec:algorithm}, we review the idea of fictitious play and two existing deep learning (DL)-based BSDE solvers: Deep BSDE and DBDP, and describe our proposed algorithm. Section~\ref{sec:theory} provides the convergence analysis for the proposed algorithm. Two numerical examples are presented in Section~\ref{sec:numerics}. The first one is ten-dimensional with an analytic solution to benchmark. It is worth mentioning that the first example is not the frequently used MV-FBSDEs associated with linear quadratic problems that have been extensively benchmarked in the literature. In the second example, we study a mean-field flocking problem under the Cucker-Smale model. The mean-field interaction there involves the whole distribution of the state processes. We conclude in Section~\ref{sec:conclusion}.

\medskip

\noindent\textbf{Notations.} 
Throughout the paper, we fix a probability space $(\Omega, \mc{F}, \PP)$ that supports $q$-dimensional Brownian motions $(W_t^{n, k})_{t\in [0,T]}$, $n = 1, \ldots, N$, $k = 1, \ldots$ and $(W_t)_{t\in [0,T]}$ on the fixed horizon $[0,T]$. Let $\mathbb{F} := (\mc{F}_t)_{t \in [0,T]}$ be the natural filtration  associated with $W$. We denote by $\mathbb{H}^2(\RR^d)$ the space
\begin{equation*}
    \mathbb{H}^2(\RR^d) = \left\{x \text{ is } \RR^d \text{-valued progressively measurable: } \EE\int_0^T \|x_t\|^2 \ud s < \infty \right\},
\end{equation*}
where $\|\cdot\|$ denote the $\ell^2$ norm. The space $\mathbb{S}^2(\RR^d)$ denotes all the continuous $\RR^d$-valued progressively measurable processes $x_t$ such that $\EE[\sup_{0 \leq t \leq T}\|x_t\|^2]<\infty$.
We fix the dimension of $(X_t, Y_t, Z_t)$ in \eqref{def:MV-FBSDE} by $X_t \in \RR^d$, $Y_t \in \RR^p$ and $Z_t \in \RR^{p \times q}$, and denote by $\nu_t$ and $ \mu_t$ the empirical measures of $(X_t, Y_t, Z_t)$ and $X_t$, respectively.

Denote by $\MCP(\RR^d)$ the space of probability measures on $\RR^d$, and by $\MCP^s(\RR^d)$ a subspace of $\MCP(\RR^d)$ with finite $s^{th}$-moments, \emph{i.e.}, $\mu \in \MCP^s(\RR^d)$ if and only if
\begin{equation*}
    M_s(\mu) := \left(\int_{\RR^d}\|x\|^s \ud \mu(x)\right)^{1/s} < +\infty.
\end{equation*}
 We will mostly work with probability measures with finite second moments, \emph{i.e.}, $\MCP^2(\RR^d)$.

\section{Deep Learning Algorithm for McKean-Vlasov FBSDEs}\label{sec:algorithm}

Compared to the standard FBSDEs problem, solving the MV-FBSDEs \eqref{def:MV-FBSDE} faces two additional difficulties: 
\begin{enumerate}
    \item The law $\mc{L}(X_t, Y_t, Z_t)$ is unknown a priori, but determined as a fixed-point;
    \item Even given the law $\mc{L}(X_t, Y_t, Z_t)$, the dependence of coefficient functions $(b,\sigma,h,g)$ on $\mc{L}(X_t, Y_t, Z_t)$ can be so complicated that the resulting FBSDEs are computationally challenging to solve.
\end{enumerate}
To overcome the first difficulty, we use the idea of fictitious play. Fictitious play is originally introduced by Brown \cite{brown1949some,brown1951iterative} as a learning process for finding Nash equilibrium in static games,  and has attracted many attentions and has been used in machine learning algorithms and theory for various settings; see, for instance, in \cite{Hu2:19,HaHu:19,han2020convergence,EliePerolatLauriereGeistPietquin-2019_AFP-MFG,xie2020provable,Xuanetal:21,min2021signatured,han2021deepham}. Such problems usually require finding an ``equilibrium'' quantity $Q^\ast$. In turn, one could start with an initial guess $Q^{(0)}$ of this ``equilibrium'', use it to solve the problem, and update the guess $Q^{(1)}$ accordingly. Then the problem with a given $Q^{(k)}$ is solved repeatedly, producing a sequence $Q^{(0)}, Q^{(1)}, \ldots, Q^{(k)}, \ldots$ which one hopes to have a limit $Q^\ast$. In $N$-player games, the quantity is the Nash equilibrium strategy, while in mean-field games, it is the law of the optimal state processes. In this paper, it will be the flow of measures of $(X_t, Y_t, Z_t)$. Note that the idea of fictitious play is very much in line with the theoretical construction of the solution to \eqref{def:MV-FBSDE}, which is usually done by recasting it into a well-posed fixed point problem over the argument $\mc{L}(X_t, Y_t, Z_t)$. There, the first step is to use some fixed distribution as an input and then solve it as a standard FBSDE, and the goal is to find a fixed point for the map from the input distribution to the law of the standard FBSDE in appropriate spaces of functions and measures. To overcome the second difficulty, we observe that $\mc{L}(X_t, Y_t, Z_t)$ are deterministic functions of time $t$, so we can view $m_1$ and $m_2$ in \eqref{def:MV-FBSDE-simple} as functions of $(t, x)$, and similarly view $m_3$ as a function of $x$. Therefore, we use supervised learning to learn these maps directly, given the latest empirical estimate of $\mc{L}(X_t, Y_t, Z_t)$.

Now we are ready to present the proposed algorithm. To ease the notation, we define the processes $\Theta \stackrel{\triangle}{=} (X, Y, Z)$ 
where $(X, Y, Z)$ are in \eqref{def:MV-FBSDE}, and define $\RR^\theta  \stackrel{\triangle}{=} \RR^d \times \RR^p \times \RR^{p\times q}$ which the space $\Theta_t$ lies in. In the sequel, any sub/superscript added to $\Theta$ are automatically applied to $(X, Y, Z)$, {\it e.g.},  $\Theta^k_{t_i} = (X^k_{t_i}, Y^k_{t_i}, Z^k_{t_i})$ represents the time $t_i$ value of the MV-FBSDEs solved at the $k^{th}$ iteration of fictitious play.

\subsection{The Deep MV-FBSDE Algorithm}
\label{sec:algorithm_outline}
We first assume the following structures for functions $(b, \sigma, h, g)$. Let $m_1, m_2: [0, T]\times \RR^d \times \MCP^2(\RR^\theta)\rightarrow \RR^l$ and $m_3: \RR^d \times \MCP^2(\RR^d)\rightarrow \RR^l$ be vector-valued functions such that 
\begin{equation}\label{assump:algo2}
    \begin{aligned}
       &b = b(t, \Theta_t, m_1(t, X_t, \MCL(\Theta_t))), && \sigma = \sigma(t, X_t),   \\
     &h = h(t, \Theta_t, m_2(t, X_t, \MCL(\Theta_t))), &&g = g(X_T, m_3(X_T, \MCL(X_T))). 
    \end{aligned}
\end{equation}
As introduced above, our algorithm is iterative over the solution $(X_t, Y_t, Z_t)$ and their empirical measures. We use superscripts $k=1,2,\cdots$ to index such iterations. In each iteration stage, the algorithm performs the following three steps.

\paragraph{Step~1. Update the approximation to distributions $\mc{L}(\Theta)$\protect\footnote{The iteration can also proceed with a different order of three steps, as long as some proper initial conditions corresponding to $k=0$ are provided.}} Given the latest approximation to the solution $Y_0\approx u^{k-1}(0, X_0), Z_t \approx v^{k-1}(t, X_t)$ and approximation to the distribution dependence $\widehat m_1^{k-1}, \widehat m_2^{k-1}, \widehat m_3^{k-1}$ (see the second step for how these functions are constructed; when $k=1$, these functions are defined by the neural networks (NNs) with randomly initialized weights), we consider the following forward SDEs for $(\tilde X_t^{k-1}, \tilde Y_t^{k-1})$ (the superscript $k-1$ denotes the $(k-1)^{th}$ stage):
\begin{equation}
\small{\begin{dcases}\label{eq:fsde_simulation}
      \mathrm{d} \tilde X_t^{k-1} = b(t, \tilde \Theta_t^{k-1}, \widehat{m}_1^{k-1}(t, \tilde X^{k-1}_t)) \ud t + \sigma(t, \tilde X_t^{k-1}) \ud W_t, \; & \tilde X_0^{k-1} = \xi,\\
    \mathrm{d} \tilde Y_t^{k-1} = - h(t, \tilde \Theta_t^{k-1}, \widehat{m}_2^{k-1}(t, \tilde X^{k-1}_t)) \ud t + v^{k-1}(t, \tilde X^{k-1}_t) \ud W_t,\; & \tilde Y_0^{k-1}= u^{k-1}(0, \tilde X_0^{k-1}).
  \end{dcases}}
\end{equation}
and use the corresponding distribution
\begin{equation*}
    \nu_t^{k-1} = \mc{L}(\tilde \Theta_t^{k-1}), \quad \mu_T^{k-1} = \mc{L}(\tilde X_T^{k-1}),
\end{equation*}
for approximating $\mc{L}(\Theta_t)$ and $\mc{L}(X_T)$. In practice, we will further approximate $\nu_t^{k-1}$ and  $\mu_T^{k-1}$ by their empirical versions, as detailed in Section~\ref{sec:alg_details}.

\paragraph{Step~2. Approximate the distribution dependence $m_1$, $m_2$ and $m_3$} 
Given the latest estimates of $\mc{L}(\Theta_t)$ and $\mc{L}(X_T)$ , we can view $m_1, m_2$ as functions of $(t, x)$ and $m_3$ as a function of $x$. Naturally, we can parameterize these functions using neural networks and optimize through supervised learning. After the $(k-1)^{th}$ stage, we optimize
\begin{equation}\label{def:learn_m_nn}
    \begin{aligned}
     &\inf_{\mathfrak{m}_1} \int_0^T\EE\|m_1(t, \tilde X_t^{k-1}, \nu_t^{k-1}) - \mathfrak{m}_1(t, \tilde X_t^{k-1})\|^2 \ud t, \\
    &\inf_{\mathfrak{m}_2} \int_0^T\EE\|m_2(t, \tilde X_t^{k-1}, \nu_t^{k-1}) - \mathfrak{m}_2(t, \tilde X_t^{k-1})\|^2 \ud t, \\
    & \inf_{\mathfrak{m}_3}  \EE\|m_3(\tilde X_T^{k-1}, \mu_T^{k-1}) - \mathfrak{m}_3(\tilde X_T^{k-1})\|^2,
    \end{aligned}
\end{equation}
where $\mathfrak{m_i}$, $i = 1, 2, 3$ are searched over a class of NNs (to be specified in Section~\ref{sec:numerics}). The optimized NNs, denoted by $\widehat m_1^{k}$, $\widehat m_2^{k}$ and $\widehat m_3^{k}$ respectively,  will be needed in Step 3 below, and Step 1 in the next stage.

\paragraph{Step~3. Solve the standard FBSDEs}
The FBSDEs to be solved at the $k^{th}$ stage for $(X_t^k, Y_t^k, Z_t^k)$ read as 
\begin{equation}\label{def:algo-fbsde}
    \begin{dcases}
     \ud X_t^k = b(t, \Theta_t^k, \widehat m_1^k(t, X_t^{k}))\ud t + \sigma(t, X_t^k)\ud W_t, & X^k_0 = \xi,\\
    \ud Y^k_t = - h(t, \Theta_t^k, \widehat m_2^k(t, X_t^{k}))\ud t + Z^k_t\ud W_t, & Y^k_T= g(X^k_T, \widehat m_3^k(X^{k}_T)),
    \end{dcases}
\end{equation}
and one aims to find the optimal NNs $\psi^k$ and $\phi^k$ that parameterize $Y_0^k$ and $Z^k_t$, respectively. The exact way of obtaining $\psi^k$ and $\phi^k$ depends on the particular BSDE solver that one will use, and this will be presented in Section~\ref{sec:BSDEsolver}. The functions $\psi^k$ and $\phi^k$ are considered as approximations to the decoupling field $u^k$ and $v^k$ of equation~\eqref{def:MV-FBSDE-fp}.

\begin{rem}
    A novel step in our algorithm is Step 2 (the supervised learning on $m_i$). It produces $m_i$'s proxy $\widehat m_i$, $i = 1, 2, 3$, thus provides quick evaluations of $m_i$. This is crucial in our ``Step 3: Solve the Standard FBSDEs''. If such $\widehat m_i$ is not available, one needs to solve the following system 
\begin{equation*}
    \begin{dcases}
     \ud X_t^k = b(t, \Theta_t^k,  m_1(t, X_t^{k}, \mc{L}(\Theta_t^k)))\ud t + \sigma(t, X_t^k)\ud W_t, & \hspace{-8pt} X^k_0 = \xi,\\
    \ud Y^k_t = - h(t, \Theta_t^k,  m_2(t, X_t^{k}, \mc{L}(\Theta_t^k))\ud t + Z^k_t\ud W_t, & \hspace{-8pt} Y^k_T= g(X^k_T,  m_3(X^{k}_T, \mc{L}(X_T^k))),
    \end{dcases}
\end{equation*}
in Step 3. However, by its definition (the dependence on $\mc{L}(\cdot)$), it can only be approximated via empirical distributions, thus very time-consuming compared to the quick evaluation of $\widehat m_i(t, x)$.
\end{rem}

\begin{rem}
Notice that when we learn $(\widehat{m}_1^k, \widehat{m}_2^k, \widehat{m}_3^k)$ in the optimization problem \eqref{def:learn_m_nn}, the difference is evaluated along $\tilde X_t^{k-1}$; while when we solve the FBSDEs \eqref{def:algo-fbsde}, we plug in $X_t^k$. The mismatch between the distribution of the random variables that we use to learn the functions and the random variables that we use to solve FBSDEs will lead to theoretical and numerical difficulty, and this is the very reason why we limit $(b,\sigma,h,g)$ in the forms \eqref{assump:algo2}. Under this assumption, the distribution of $X_t^k$ is absolutely continuous with respect to the distribution of $\tilde{X}_t^{k-1}$ through the Girsanov theorem; hence we can use the errors under $\tilde{X}_t^{k-1}$ to control the errors under $X_t^k$; see Lemma~\ref{lem: Girsanov} for a detailed statement.

An alternative approach is that one could use $(\widehat{m}_1^k, \widehat{m}_2^k, \widehat{m}_3^k)$ in the other two steps the same way as in the second step,
{\it i.e.}, replace
$(\Theta_t^{k-1}, X_t^{k-1})$ by $(\Theta_t^{k-2}, X_t^{k-2})$ in equation~\eqref{eq:fsde_simulation} and $(\Theta_t^{k}, X_t^{k})$ by $(\Theta_t^{k-1}, X_t^{k-1})$ in equation~\eqref{def:algo-fbsde}
in model coefficients $(b, \sigma, h, g)$. In this case, we will use the same random variables to learn the functions and to solve the FBSDEs and hence can handle more general setting on $(b,\sigma,h,g)$:
\begin{equation*}
    (b, h, \sigma) = (b, h, \sigma)(t, \Theta_t, m_1(t, \Theta_t, \MCL(\Theta_t))), \quad 
     g = g(X_T, m_2(X_T, \MCL(X_T))).
\end{equation*}
However, if one wants to simulate a path of $X_t^k$, one needs to first simulate paths of $X_t^0, X_t^1, \dots, X_t^{k-1}$ under the same realization of Brownian motions, which will not be computationally efficient.

In a nutshell, there is a trade-off between the two choices, and we decide to go with the current choice \eqref{assump:algo2} for the analysis in Section~\ref{sec:theory}, since it is more numerically favorable.

\end{rem}

\subsection{BSDE Solvers}\label{sec:BSDEsolver}
We now describe the numerical algorithm for solving the FBSDEs~\eqref{def:algo-fbsde} in the inner stage of fictitious play.
For the clarity of notations, we use generic FBSDEs
\begin{equation}\label{def:FBSDE}
\begin{dcases}
  \ud X_t = B(t, X_t, Y_t, Z_t) \ud t + \Sigma(t, X_t, Y_t, Z_t) \ud W_t, \quad &X_0 = x_0,  \\
  \ud Y_t = -H(t, X_t, Y_t, Z_t) \ud t + Z_t \ud W_t, \quad &Y_T = G(X_T),
\end{dcases}
\end{equation}
to describe the algorithm. We shall mainly consider two types of BSDE solvers that leverage neural networks and thus are suitable for high-dimensional settings.

The first algorithm is the Deep BSDE method \cite{weinan2017deep,han2018solving,han2020convergence1}, the first method designed for solving high-dimensional BSDEs and parabolic PDEs with neural networks (NNs).
The Deep BSDE method solves the variational problem which is equivalent to \eqref{def:FBSDE},
\begin{align}\label{eq:FBSDE_variation}
    &\inf_{Y_0, (Z_t)_{t \in [0,T]}} \EE\|G(X_T) - Y_T\|^2, \\
    s.t. \quad & X_t = x_0 + \int_0^t B(s, X_s, Y_s, Z_s)\ud s + \int_0^t \Sigma(s, X_s, Y_s, Z_s) \ud W_s, \notag \\
    \quad & Y_t = Y_0 - \int_0^t H(s, X_s, Y_s, Z_s) \ud s + \int_0^t Z_s \ud W_s. \notag
\end{align}
To solve the above variational problem numerically, the method works on a discretized version and parameterizes $Y$ and $Z$ at each $t_i$ as functions of $X_{t_i}$ using NNs.
To fix the notation, we consider a partition $\pi$ on the interval $[0,T]$ of size $N_T$: $0 = t_0 < t_1 < \cdots < t_i < \cdots < t_{N_T} = T$ and define $\Delta t_i := t_{i+1}- t_i$. Then we only need to minimize the $L^2$ difference between the simulated $Y_T$ and its target value $G(X_T)$:  
\begin{align}
  &\inf_{\psi \in \mc{N}', \{\phi_i \in \mc{N}_i\}_{i=0}^{N_T-1}} \EE\|G(X_T) - Y_T\|^2, \label{eq:deepBSDE-goal} \\
  s.t. &\quad X_0 = x_0, \quad Y_0 = \psi(X_0), \quad Z_{t_i} = \phi_i(X_{t_i}, Y_{t_i}), \label{eq:deepBSDE-Zt}\\
  & \quad X_{t_{i+1}} = X_{t_i} + B(t_i, X_{t_i}, Y_{t_i}, Z_{t_i}) \Delta t_i+ \Sigma(t_i, X_{t_i}, Y_{t_i}, Z_{t_i}) \Delta W_{t_i}, \label{eq:deepBSDE-Xt}\\
  & \quad Y_{t_{i+1}} = Y_{t_i} - H(t_i, X_{t_i}, Y_{t_i}, Z_{t_i}) \Delta t_i + Z_{t_i} \Delta W_{t_i}, \label{eq:deepBSDE-Yt}
\end{align}
where $\Delta W_{t_i} = W_{t_{i+1}}- W_{t_i}$ is the Brownian increment from $t_i$ and $t_{i+1}$, an $\psi$ and $\phi_i$ are functions in the hypothesis spaces $\mc{N}'$ and $\mc{N}_i$ that are generated by neural networks (NNs). Note that the above choice without $\phi_i$'s dependence on $Y$ is consistent with our goal of searching for the decoupling field (cf. \eqref{def:decouplingfield}). However, in the rigorous justification of Deep BSDE \cite{han2020convergence1}, $Y$ is needed for $\phi_i$ in the proof of coupled FBSDEs, thus we include it here.

The second algorithm we consider is the deep backward dynamic programming (DBDP) method \cite{hure2020deep}. The original DBDP \cite{hure2020deep} can only deal with the decoupled FBSDE in~\eqref{def:FBSDE}, that is, $B$ and $\Sigma$ are free of $(Y_t, Z_t)$, so does the presentation below. Accordingly, so far we can only apply the DBDP to MV-FBSDE problems in which $b$ and $\sigma$ are free of $(Y_t, Z_t)$ \footnote{The dependence on $\mc{L}(Y_t, Z_t)$ is fine as 
it will be approximated by $\widehat m_1^k$ that does not depend on $\mc{L}(Y_t, Z_t)$. This is exactly the setting of the example in Section~\ref{sec:numerics-benchmark}}. Same as the Deep BSDE method, the DBDP method parameterizes $Y_{t_i}$ and $Z_{t_i}$ at each $t_i$ as functions of $X_{t_i}$ and solves optimization problems to get the optimal parameters. Instead of solving the global minimization problem \eqref{eq:deepBSDE-goal} over the whole time interval in the Deep BSDE method, the DBDP method solves $Y_{t_i}$ and $Z_{t_i}$ sequentially and backwardly for $i = t_{N_T}-1, \ldots, 0$. Given the learnt functions $\psi^\ast_{t_{i+1}}$ representing $Y_{t_i+1}$ (by definition $Y_T$ is replaced by $g(X_t)$ at $i=t_{N_T}$), the $i^{th}$ problem is to train the neural networks $\psi_i, \phi_i$ via:
\begin{align}
	&	\inf_{\psi_i \in \mc{N}'_i, \phi_i \in \mc{N}_i} \EE \Big\|Y_{t_{i+1}} - Y_{t_i}  + H(t_i, {X}_{t_i}, Y_{t_i}, Z_{t_i}) \Delta t_i  - Z_{t_i} \Delta {W}_{t_{i}}\Big\|^2 \label{eq:DBDP-goal}\\
	s.t. & \quad Y_{t_i} = \psi_i(X_{t_i}), \quad Y_{t_{i+1}}^{\ast} = \psi_{i+1}^{ \ast}(X_{t_{i+1}}), \quad Z_{t_i} = \phi_i(X_{t_i}), \\
	& \quad X_{t_{i+1}} =  X_{t_i} + B(t_i, X_{t_i}) \Delta t_i + \Sigma(t_i, X_{t_i}) \Delta W_{t_i}. \label{eq:DBDP-Xt}
	\end{align}

In practice, the objective \eqref{eq:deepBSDE-goal} or \eqref{eq:DBDP-goal} in these two methods is approximated by its Monte-Carlo counterpart and the (near-)~optimal NN parameters that represent $\psi^\ast$ and $\phi_i^\ast$ (or $\psi_i^{\ast}$ and $\phi_i^{\ast}$ in DBDP) are obtained by stochastic gradient decent algorithms which require simulating a batch of Brownian paths at each update. To related to notations in Section~\ref{sec:algorithm_outline} Step 3, they provide $\phi^k$ and $\psi^k$. 

We remark that in the setting of MV-FBSDEs, the functions $(b, h, g)$ may depend on $\MCL(\Theta_t)$ in complicated forms, which may bring challenges to solve the optimization problems \eqref{eq:deepBSDE-goal} or \eqref{eq:DBDP-goal}. That is the reason we propose to use neural networks to approximate the dependence on the laws through \eqref{def:learn_m_nn} to make the resulting FBSDE easier to solve. For further numerical details and theoretical foundations of these two methods, we refer the readers to \cite{weinan2017deep,han2018solving,han2020convergence1,hure2020deep}.

\subsection{Implementation Details}
\label{sec:alg_details}
We now detail the algorithm outlined in Section~\ref{sec:algorithm_outline}. How the standard FBSDEs in the third step are numerically solved has been discussed in Section~\ref{sec:BSDEsolver}. So we will mainly present the numerical details of the other two steps. Same to the notations used in Section~\ref{sec:BSDEsolver}, we consider a partition $\pi$ on the interval $[0,T]$ of size $N_T$: $0 = t_0 < t_1 < \cdots < t_i < \cdots < t_{N_T} = T$ with $\Delta t_i := t_{i+1}- t_i$.
To update the distribution in the first step, we generate $N$ samples of initial condition $\xi^{n,{k-1}}$ and $N$ samples of Brownian motions $W_t^{n, {k-1}}$, $n = 1, \ldots, N$ to simulate the forward SDEs~\eqref{eq:fsde_simulation} in discrete time:
\begin{equation}\label{eq:fsde_simulation_discrete}
 \hspace{-5pt} \begin{dcases}
      \tilde X_{t_{i+1}}^{n, k-1} = \tilde X_{t_{i}}^{n, k-1} + b(t_i, \tilde \Theta_{t_i}^{n,k-1}, \widehat{m}_1^{k-1}(t_i, \tilde X_{t_i}^{n, k-1})) \Delta t_i + \sigma(t_i, \tilde X_{t_i}^{n, k-1}) \Delta W_{t_i}^{n,k-1}, \\
      \tilde Y_{t_{i+1}}^{n, k-1} = \tilde Y_{t_{i}}^{n, k-1} - h(t_i, \tilde \Theta_{t_i}^{n, k-1}, \widehat{m}_2^{k-1}(t_i, \tilde X_{t_i}^{n, k-1})) \Delta t_i + v^{k-1}(t_i, \tilde X_{t_i}^{n, k-1}) \Delta W_{t_i}^{n,k-1},
  \end{dcases}
\end{equation}
with initial conditions $\tilde{X}_0^{n, k-1} = \xi^{n,k-1}, \tilde{Y}_0^{n,k-1}= u^{k-1}(0, \tilde X_0^{n,k-1})$ and define the empirical measures
\begin{equation}\label{def:empirical}
    \nu_{t_i}^{k-1} = \frac{1}{N} \sum_{n=1}^N \delta_{\tilde \Theta_{t_i}^{n, k-1}}, \quad \mu_T^{k-1} = \frac{1}{N} \sum_{n=1}^N \delta_{\tilde X_T^{n, k-1}},
\end{equation}
for approximating $\mc{L}(\Theta_t)$ at discretized timestamps $t_i$ and $\mc{L}(X_T)$.

Next, we will use data obtained from~\eqref{eq:fsde_simulation_discrete} to construct the supervised learning problem in the second step to approximate the distribution dependence. By Monte Carlo sampling, we can approximate the problems in \eqref{def:learn_m_nn} with
\begin{equation}\label{def:learn_m_nn_discrete}
\begin{aligned}
    &\inf_{\mathfrak{m}_1} \sum_{1\leq n \leq N}\sum_{0\leq i \leq N_T-1}\|m_1(t_i, \tilde X_{t_i}^{n,k-1}, \nu_{t_i}^{k-1}) - \mathfrak{m}_1(t_i, \tilde X_{t_i}^{n,k-1})\|^2, \\
    &\inf_{\mathfrak{m}_2} \sum_{1\leq n \leq N}\sum_{0\leq i \leq N_T-1} \|m_2(t_i, \tilde X_{t_i}^{n, k-1}, \nu_{t_i}^{k-1}) - \mathfrak{m}_2(t_i, \tilde X_{t_i}^{n,k-1})\|^2, \\
    & \inf_{\mathfrak{m}_3}  \sum_{1\leq n \leq N}\|m_3(\tilde X_T^{n,k-1}, \mu_T^{k-1}) - \mathfrak{m}_3(\tilde X_T^{n,k-1})\|^2.
\end{aligned}
\end{equation}
Given $(\widehat{m}_1^{k},\widehat{m}_2^{k},\widehat{m}_3^{k})$ solved from \eqref{def:learn_m_nn_discrete}, we can solve the FBSDEs~\eqref{def:algo-fbsde} using the introduced BSDE solvers. Taking the Deep BSDE method as the example, the whole deep learning algorithm for solving the MV-FBSDEs can be summarized below.

\begin{algorithm}
    \caption{The Deep MV-FBSDEs method}
  \begin{algorithmic}[1]
  \Require{Input:} the time partition $0 = t_0 < t_1 < \cdots < t_i < \cdots < t_{N_T} = T$, the number of paths $N$ for simulating the SDEs, the initialization of the decoupling field $u^{0}(0, X_0), v^0(t, X_t)$ and the distribution dependence $(\widehat{m}_1^0,\widehat{m}_2^0,\widehat{m}_3^0)$
     \For{$k= 1, 2, \dots$}
        \State Simulate the forward SDEs~\eqref{eq:fsde_simulation_discrete} to collect the data $\tilde X_{t_i}^{n,k-1}, \tilde Y_{t_i}^{n,k-1}, \tilde Z_{t_i}^{n,k-1}$, $n=1,\dots, N, i=0, \dots, N_T$
        \State Define the empirical measures according to \eqref{def:empirical}
        \State Train $(\widehat{m}_1^{k},\widehat{m}_2^{k},\widehat{m}_3^{k})$ to solve the optimization problem~\eqref{def:learn_m_nn_discrete}
        \State Train $\psi^{k}$ and $\phi^{k}$ to solve the variational problem~\eqref{eq:deepBSDE-goal} corresponding to the FBSDEs~\eqref{def:algo-fbsde}
     \EndFor
  \end{algorithmic} 
  \label{alg:game}
\end{algorithm}

\section{Convergence Analysis}\label{sec:theory}
This section is dedicated to the numerical analysis of the algorithms proposed in Section~\ref{sec:algorithm}, in particular, the justification of learning $\widehat m_i^k$, $i = 1, 2, 3$ from \eqref{def:learn_m_nn_discrete}.
The metric we use here for measuring the convergence of distributions in $\MCP^2(\RR^\theta)$  falls into the category of integral probability metrics \cite{muller1997integral}, under which we can show various convergence speeds being free of the dimensionality $\theta$. We give a brief review below for the readers' convenience.

\subsection{A Class of Integral Probability Metrics (IPMs)}\label{sec:GMMD}
Firstly introduced in \cite{han2021class}, the integral probability metric defines a metric on the space of probability measures $\MCP(E)$ based on a class of test functions.
\begin{defn}
Let $\Phi$ be a class of measurable functions on a Euclidean space $E$ such that 
\begin{equation*}
    \sup_{f \in \Phi} |f(x)| \leq C(1 + \|x\|),
\end{equation*}
for a constant $C >0$ depending on $\Phi$. Then for any $\mu$, $\mu' \in \MCP(E)$, the IPM $D_\Phi$ is defined by  
\begin{equation*}
    D_{\Phi}(\mu, \mu') = \sup_{f \in \Phi} | \int_{E} f \ud (\mu - \mu')|.
\end{equation*}
\end{defn}

\begin{assu}[Test function class]\label{function_class_assumption}
Assume ${\Phi}$ satisfies the following properties:
\begin{enumerate}
    \item[(a)] If $\mu$ is a signed measure on $E$, 
    \begin{equation*}
        \int_{E}f \rmd \mu = 0, \; \forall f \in {\Phi} \Rightarrow \mu \equiv 0; 
    \end{equation*}
    \item[(b)] There exists a constant $A > 0$, such that for any $f \in {\Phi}$, $\mathrm{Lip}(f) \le A$, $|f(0)|\le A$, and 
   for any $\mathcal{X}= \{x^1,\dots,x^N\} \subset E$, the empirical Rademacher complexity satisfies
    \begin{equation}
        \mathrm{Rad}_N({\Phi},\mathcal{X}) \coloneqq \frac{1}{N}\EE \sup_{f \in {\Phi}} |\sum_{i=1}^N\xi_i f(x^i)| \le \frac{A}{N} \sqrt{\sum_{i=1}^N(\|x^i\|^2 + 1)}, 
    \end{equation}
    where $\xi_1,\dots,\xi_N$ are i.i.d. random variables drawn from the Rademacher distribution, {\it i.e.}, $\mathbb{P}(\xi_i = 1) = \mathbb{P}(\xi_i = -1) = \frac{1}{2}$.
\end{enumerate}
\end{assu}

Then under Assumption~\ref{function_class_assumption}, it is shown (\emph{cf.} \cite{han2021class}) that the convergence rate of empirical measures associated from independent samples drawn from a given distribution $\mu \in \MCP^2(E)$, measured by $D_\Phi$, is free of the dimensionality of $E$.

\begin{thm}[{\cite[Theorem 2.1]{han2021class}}]\label{thm: GMMD_error}
Under Assumption~\ref{function_class_assumption}, $D_\Phi$ is a metric on $\mc{P}^1(E)$. For any $\mu_1$ and $\mu_2 \in \mathcal{P}^1(E)$,
\begin{equation}\label{Wasserstein_relation}
    D_\Phi(\mu_1,\mu_2) \le A\mathcal{W}_1(\mu_1,\mu_2),
\end{equation}
where $\mc{W}_1$ denotes the 1-Wasserstein metric. In addition,  Given $\mu \in \mathcal{P}^2(E)$, let $X^1,\dots,X^N$ be i.i.d. random variables drawn from the distribution $\mu$ and 
\begin{equation*}
    \bar{\mu}^N = \frac{1}{N}\sum_{i=1}^N \delta_{X^i}
\end{equation*}
be the empirical measure of $X^1, \ldots, X^N$. Then,
\begin{align*}
    &\EE \rmD_{{\Phi}}(\mu,\bar{\mu}^N)
    \le \frac{2A}{\sqrt{N}} \sqrt{\int_{\bR^d}[\|x\|^2+1] \rmd \mu(x)},\\
    &\EE \left[\rmD_{\Phi}^2(\mu, \bar{\mu}^N)\right]\le  \frac{5A^2}{N}\int_{\bR^d}[\|x\|^2+1] \rmd \mu(x).
\end{align*}

\end{thm}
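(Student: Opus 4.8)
The plan is to establish the four assertions in turn: the metric axioms, the comparison with $\mathcal{W}_1$, and then the two empirical-measure estimates, which are the substantive part. First I would verify that $D_\Phi$ is a metric on $\mathcal{P}^1(E)$. Finiteness is immediate from the growth bound $\sup_{f\in\Phi}|f(x)|\le C(1+\|x\|)$ together with the finiteness of the first moments of $\mu,\mu'$. Non-negativity, symmetry, and the triangle inequality follow directly from the definition as a supremum of absolute values of the linear functionals $f\mapsto\int f\,\mathrm{d}(\mu-\mu')$: for each fixed $f$ one writes $\int f\,\mathrm{d}(\mu-\mu'')=\int f\,\mathrm{d}(\mu-\mu')+\int f\,\mathrm{d}(\mu'-\mu'')$ and takes suprema. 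The only nontrivial axiom is the identity of indiscernibles, and this is exactly where Assumption~\ref{function_class_assumption}(a) enters: $D_\Phi(\mu,\mu')=0$ forces $\int f\,\mathrm{d}(\mu-\mu')=0$ for every $f\in\Phi$, so the signed measure $\mu-\mu'$ vanishes. The bound $D_\Phi(\mu_1,\mu_2)\le A\mathcal{W}_1(\mu_1,\mu_2)$ then follows from $\mathrm{Lip}(f)\le A$ in Assumption~\ref{function_class_assumption}(b): since $f/A$ is $1$-Lipschitz, Kantorovich--Rubinstein duality gives $|\int (f/A)\,\mathrm{d}(\mu_1-\mu_2)|\le\mathcal{W}_1(\mu_1,\mu_2)$, and taking the supremum over $f\in\Phi$ yields the claim.

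For the first empirical estimate I would use a standard symmetrization argument. Writing $D_\Phi(\mu,\bar\mu^N)=\sup_{f\in\Phi}|\frac1N\sum_{i=1}^N f(X^i)-\EE f|$ and introducing an independent ghost sample $X'^1,\dots,X'^N$, one replaces $\EE f$ by $\EE_{X'}\frac1N\sum_i f(X'^i)$, pulls the ghost expectation outside the supremum by Jensen, and inserts Rademacher signs $\xi_i$ using the symmetry of $f(X^i)-f(X'^i)$. Splitting into two groups of terms bounds the expectation by $2\,\EE_X[\mathrm{Rad}_N(\Phi,\mathcal{X})]$. Assumption~\ref{function_class_assumption}(b) then controls $\mathrm{Rad}_N(\Phi,\mathcal{X})$ by $\frac{A}{N}\sqrt{\sum_i(\|X^i\|^2+1)}$, and Jensen's inequality for the concave square root together with finiteness of the second moment gives $\EE_X\sqrt{\sum_i(\|X^i\|^2+1)}\le\sqrt{N\int(\|x\|^2+1)\,\mathrm{d}\mu}$, producing the stated $\frac{2A}{\sqrt N}$ bound.

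For the second-moment estimate I would write $\EE[D_\Phi^2(\mu,\bar\mu^N)]=\mathrm{Var}(D_\Phi(\mu,\bar\mu^N))+(\EE D_\Phi(\mu,\bar\mu^N))^2$ and bound the two pieces separately. The square of the mean is at most $\frac{4A^2}{N}\int(\|x\|^2+1)\,\mathrm{d}\mu$ by the previous step. For the variance I would apply the Efron--Stein inequality to $F(X^1,\dots,X^N)=D_\Phi(\mu,\bar\mu^N)$: replacing $X^j$ by an independent copy $X'^j$ changes $F$ by at most $\frac1N\sup_{f\in\Phi}|f(X^j)-f(X'^j)|\le\frac{A}{N}\|X^j-X'^j\|$, using only the Lipschitz property, so that $\mathrm{Var}(F)\le\frac12\sum_j\frac{A^2}{N^2}\EE\|X^j-X'^j\|^2\le\frac{A^2}{N}\int\|x\|^2\,\mathrm{d}\mu$. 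Adding the two bounds gives $\frac{5A^2}{N}\int(\|x\|^2+1)\,\mathrm{d}\mu$.

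The main obstacle is the second-moment bound. Because the test functions in $\Phi$ grow linearly rather than being uniformly bounded, one cannot invoke a naive bounded-difference (McDiarmid) estimate with deterministic increments; the per-coordinate fluctuations are themselves random and must be controlled through the Lipschitz constant and the finite second moment. Routing the variance through Efron--Stein with the random increment $\frac{A}{N}\|X^j-X'^j\|$ is the key device, and combining it with the sharp form of the first-moment estimate is what yields the clean constant $5A^2$, rather than the larger constant one would obtain from directly symmetrizing the square.
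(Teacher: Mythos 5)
Your proof is correct, and there is nothing in this paper to compare it against line by line: the statement is quoted verbatim from \cite[Theorem 2.1]{han2021class} and the paper gives no proof of it. Your route --- Kantorovich--Rubinstein for \eqref{Wasserstein_relation}, ghost-sample symmetrization plus the Rademacher bound of Assumption~\ref{function_class_assumption}(b) for the first-moment estimate, and the decomposition $\EE[\rmD_\Phi^2]=\mathrm{Var}(\rmD_\Phi)+(\EE \rmD_\Phi)^2$ with Efron--Stein (using $|f(X^j)-f(X'^j)|\le A\|X^j-X'^j\|$ and $\EE\|X^j-X'^j\|^2\le 2\int\|x\|^2\rmd\mu$) for the second --- is sound and reproduces exactly the stated constants $2A/\sqrt{N}$ and $5A^2/N = (4+1)A^2/N$, which is strong evidence it coincides with the argument in the cited source.
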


A similar result holds for measures on the space of continuous functions $\hspace{-1pt}C([0,T]; \hspace{-2pt}E)$ with additional assumptions regarding functions' modulus of continuity. Test function classes particularly discussed therein include the reproducing kernel Hilbert space (RKHS), the Barron space, and flow-induced function spaces \cite{weinan2019barron}. Direct applications of metrics induced by such test function classes are that, the approximation error to the Nash equilibrium of finite player games by its mean-field limit is independent of the dimensionality of $E$, and that the empirical distribution associated with a finite-particle system to its McKean-Vlasov limit is free of CoD. While if measured by Wasserstein metric $\mc{W}_1$ or $\mc{W}_2$, these convergence rates decrease as the dimension of $E$ increases. 

\subsection{Numerical Analysis}
Our analysis will focus on the following Mckean-Vlasov FBSDEs for $(X_t,Y_t,Z_t) \in \mathbb{H}^2(\bR^d \times \bR^p\times \bR^{p\times q})$:
\begin{align}\label{def:MV-FBSDE-simple}
\begin{dcases}
  \ud X_t = b(t, X_t,m_1(t,X_t,\MCL(X_t))) \ud t + \sigma(t, X_t) \ud W_t, &X_0 = \xi,  \\
  \ud Y_t = -h(t, \Theta_t, m_2(t,X_t,\MCL(X_t, Y_t))) \ud t + Z_t \ud W_t, &Y_T = g(X_T, m_3(X_T,\MCL(X_T))).
\end{dcases}
\end{align}
For the coupled Mckean-Vlasov FBSDEs, that is, $b$ depends on $Y$, $Z$ or $\mc{L}(X,Y)$, similar results can be obtained under small duration conditions or the existence of global Lipschitz decoupling fields, like the situations in \cite{han2020convergence,han2020convergence1,reisinger2020path}. If $m_1$ and $m_2$ also depend on $\mc{L}(X,Y,Z)$, then we can also obtain the result following the method in \cite[Section~4.2]{han2020convergence} if we have the following regularity \cite{reisinger2020path} with respect to $Z$,
\begin{equation*}
    \EE\|Z_t - Z_{t'}\|_F^2 \le C\|t -t'\|,
\end{equation*}
where $\|\cdot\|_F$ is the Frobenius norm. 
 
\begin{assu}\label{assumption bsde}
\begin{enumerate}[(a)]
    \item The functions $b(t, x, m): [0, T]\times \bR^d \times \bR^l \to \bR^d $, $\sigma(t, x): [0,T] \times \bR^d  \to \bR^{d \times q}$, $h(t, x, y,z, m): [0,T] \times \bR^\theta \times \RR^l \to \bR^p$ and $g(x,m): \bR^d \times \RR^l \to \bR^p$ are Lipschitz with respect to $x,y,z$ and $m$, with a Lipschitz constant $L$:
    \begin{align*}
        \|b(t,x,m) - & b(t,x',m')\|^2 + \|\sigma(t,x) - \sigma(t,x')\|_F^2  \\
         &+ \|h(t,x, y, z, m) - h(t, x', y', z',m')\|^2 + \|g(x,m) - g(x,m')\|^2  \\
        & \qquad \le  L[\|x-x'\|^2 + \|y-y'\|^2 + \|z-z'\|_F^2 + \|m - m'\|^2].
    \end{align*}
   
    \item The functions $m_1(t,x,\mu):[0,T]\times\bR^d\times\MCP^2(\RR^d)\rightarrow \RR^l$, $m_2(t,x,\nu):[0,T]\times\RR^d\times\MCP^2(\RR^d\times\RR^p)\rightarrow \RR^l$ and $m_3(x,\mu):\bR^d \times\MCP^2(\RR^d)\rightarrow \RR^l$ are Lipschitz with respect to $x$, $\mu$ and $\nu$, with the same constant $L$:
    \begin{multline*}
       \|m_1(t,x,\mu) - m_1(t,x',\mu')\|^2 + \|m_2(t,x,\nu)  - m_2(t,x',\nu')\|^2 \\
       + \|m_3(x,\mu) - m_3(x',\mu')\|^2 
       \le L[\|x - x'\|^2 + \rmD_{\Phi}^2(\mu,\mu') +\rmD_{\Phi'}^2(\nu,\nu')],
    \end{multline*}
    where $\Phi \subset \RR^d$ and $\Phi' \subset \RR^d \times\RR^p$ are two classes of functions for the IPM and satisfy Assumption~\ref{function_class_assumption}. 
    \item The functions $b$, $\sigma$, $h$, $g$, $m_1$ and $m_2$ is 1/2-H\"older continuous with respect to $t$. We will use $K$ to denote the H\"older constant.
    \item There exists a constant $K$, such that
    \begin{multline*}
        \|b(t,0, 0)\|^2 + \|\sigma(t,0)\|_F^2 +  \|h(t,0,0)\|^2 +  \|g(0,0)\|^2 +  \|m_1(t,0,\delta_0)\|^2 \\
        +\|m_2(t,0,\delta_0)\|^2 +\|m_3(0,\delta_0)\|^2+ \EE\|\xi\|^2 \le K,
    \end{multline*}
    where $\delta_0$ denotes the Dirac measure at $0$.
\end{enumerate}
\end{assu}
\begin{lem}\label{thm: wellpose}
Under Assumption~\ref{assumption bsde}, the MV-FBSDEs \eqref{def:MV-FBSDE-simple} possess a unique adapted solution (i.e., $(X_t, Y_t, Z_t)\in \mb{S}^2(\RR^d)\times \mb{S}^2(\RR^p) \times \mb{H}^2(\RR^{p\times q})$ are adapted to the filtration $\mb{F}= (\mc{F}_t)_{t \in [0,T]}$) such that
\begin{equation*}
    \sup_{0\le t \le T}[\EE\|X_t\|^2 + \EE\|Y_t\|^2] + \int_{0}^T\EE\|Z_t\|_F^2\rmd t \le C,
\end{equation*}
where $C > 0$ is a constant depending only on $A$, $L$, $K$ and $T$. 
\end{lem}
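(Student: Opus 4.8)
The plan is to exploit the fact that in \eqref{def:MV-FBSDE-simple} the forward equation is self-contained: $b$ and $\sigma$ depend on the unknowns only through $X_t$ and its marginal law $\MCL(X_t)$, never through $(Y_t,Z_t)$. I would therefore split the system into a forward McKean--Vlasov SDE, solved first, followed by a (still mean-field) BSDE driven by the now-frozen flow of marginals. The one device used throughout is Theorem~\ref{thm: GMMD_error}: since $D_{\Phi}(\mu,\mu')\le A\,\mc{W}_1(\mu,\mu')\le A\,\mc{W}_2(\mu,\mu')$, Assumption~\ref{assumption bsde}(b) upgrades to Lipschitz continuity of $m_1,m_2,m_3$ in their measure arguments with respect to the $2$-Wasserstein distance, with constant inflated only by $A$. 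Composing with the $x$-Lipschitz and linear-growth bounds of Assumption~\ref{assumption bsde}(a),(d), the effective coefficients $\tilde b(t,x,\mu):=b(t,x,m_1(t,x,\mu))$ and so on are Lipschitz in $(x,\mu)$ in the ordinary Wasserstein sense, so the standard McKean--Vlasov well-posedness machinery (e.g.\ \cite{CaDe1:17}) applies.

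For Step 1 (forward SDE) I would set up the fixed-point map $\Gamma$ on $\mathbb{S}^2(\RR^d)$ sending a process $U$ to the unique strong solution of the \emph{standard} SDE obtained by freezing $\MCL(U_t)$ inside $m_1$; the inner solution exists and is unique because $x\mapsto b(t,x,m_1(t,x,\MCL(U_t)))$ and $\sigma$ are Lipschitz with linear growth. A routine It\^o-plus-Gronwall computation gives $\sup_t\EE\|X_t\|^2\le C(L,K,T)$ and, for two inputs, $\EE\|\Gamma(U)_t-\Gamma(U')_t\|^2\le C\int_0^t\EE\|U_s-U'_s\|^2\,\ud s$ after using $\mc{W}_2^2(\MCL(U_s),\MCL(U'_s))\le\EE\|U_s-U'_s\|^2$; Picard iteration (or a short-horizon contraction concatenated across $[0,T]$) then yields a unique $X\in\mathbb{S}^2(\RR^d)$ together with the stated moment bound.

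For Step 2 (mean-field BSDE), with $X$ and $(\MCL(X_t))_t$ fixed, the terminal value $\eta:=g(X_T,m_3(X_T,\MCL(X_T)))$ is a fixed square-integrable $\mc{F}_T$-measurable random variable by the growth bounds in (a),(b),(d). I would then run a second fixed point over the flow of \emph{joint} laws $\rho=(\rho_t)_t$ with prescribed first marginal $\MCL(X_t)$, restricted to flows of uniformly bounded second moment: for given $\rho$, the driver $(t,y,z)\mapsto h(t,X_t,y,z,m_2(t,X_t,\rho_t))$ is Lipschitz in $(y,z)$ with square-integrable data, so Pardoux--Peng gives a unique $(Y^{\rho},Z^{\rho})\in\mathbb{S}^2\times\mathbb{H}^2$, and I set $\Psi(\rho)_t=\MCL(X_t,Y^{\rho}_t)$. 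Coupling through the common $X_t$ gives $\mc{W}_2^2(\Psi(\rho)_t,\Psi(\rho')_t)\le\EE\|Y^{\rho}_t-Y^{\rho'}_t\|^2$, while the BSDE stability estimate controls $\sup_t\EE\|Y^{\rho}_t-Y^{\rho'}_t\|^2+\int_0^T\EE\|Z^{\rho}_t-Z^{\rho'}_t\|_F^2\,\ud t$ by $L A^2\int_0^T\mc{W}_2^2(\rho_s,\rho'_s)\,\ud s$; this closes the fixed point and, applied to the resolved equation, the standard $L^2$ BSDE bound yields $\sup_t\EE\|Y_t\|^2+\int_0^T\EE\|Z_t\|_F^2\,\ud t\le C$.

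I expect the main obstacle to be the contraction for this second fixed point, because the mean-field term enters the \emph{backward} equation through the joint law $\MCL(X_t,Y_t)$ at the current time. Unlike the forward case, a direct Gronwall does not immediately produce a contraction, so I would rely on the exponential-weight (or short-horizon-then-concatenate) argument familiar from mean-field BSDE theory, which is precisely where the constant's dependence on $T$ and the smallness required for contraction are tracked. Everything else---reducing $D_{\Phi}$ to $\mc{W}_2$, composing the Lipschitz/growth bounds, and the a priori $L^2$ estimates---is routine once the forward/backward splitting is in place; the $1/2$-H\"older-in-$t$ hypothesis (c) is not needed for well-posedness and is reserved for the later path-regularity analysis.
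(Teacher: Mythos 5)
Your proposal is correct and takes essentially the same route as the paper: the paper's proof consists of observing that Theorem~\ref{thm: GMMD_error} (namely $\dF(\mu,\mu')\le A\,\mathcal{W}_1(\mu,\mu')$) upgrades Assumption~\ref{assumption bsde}(b) to the standard Wasserstein--Lipschitz hypotheses and then citing \cite[Theorems~4.21 and 4.23]{CaDe1:17}, which are precisely the forward McKean--Vlasov SDE and mean-field BSDE well-posedness results whose fixed-point proofs (forward/backward splitting, Picard iteration, coupling through the common $X_t$) you reconstruct by hand. The only difference is that you reprove the cited textbook results rather than invoking them.
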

 \begin{proof}
 Theorem \ref{thm: GMMD_error} and Assumption \ref{assumption bsde} together imply that the MV-FBSDEs \eqref{def:MV-FBSDE-simple} satisfy the standard assumption in \cite[Sections~4.2.1 and 4.2.2]{CaDe1:17}. Therefore, with \cite[Theorems 4.21 and 4.23]{CaDe1:17}, we obtain the desired result.
\end{proof}

Since $\mc{L}(X_t, Y_t, Z_t)$ are deterministic functions of time $t$, we can view $m_1$ and $m_2$ in \eqref{def:MV-FBSDE-simple} as functions of $(t, x)$, and similarly view $m_3$ as a function of $x$. We define
\begin{equation}\label{def:mtilde}
    \tilde m_1(t, x) = m_1(t, x, \mc{L}(X_t)), \; \tilde m_2(t, x) = m_2(t, x, \mc{L}(X_t, Y_t)), \; \tilde m_3(x) = m_3(x, \mc{L}(X_T)),
\end{equation}
where $(X_t, Y_t)$ is the solution to \eqref{def:MV-FBSDE-simple}.
This motivates us, in practice, to use neural networks to parameterize $m_1, m_2$ and $m_3$ with inputs of $(t, x)$ and $x$, as stated in \eqref{def:learn_m_nn}. To analyze the error associated with fictitious play, we define the following function sets $\mc{M}_1$ and $\mc{M}_2$:  given a fixed constant $M > 0$,
\begin{align*}
    &\mathcal{M}_1 = \{m:[0,T]\times \RR^d\rightarrow \RR^l, \|m(t,x) - m(t',x')\|^2 \le M [|t-t'|+\|x - x'\|^2], \\
    & \hspace{200pt} \|m(t,x)\| \le M[1 + \|x\|]  \},\\
    &\mathcal{M}_2 =\{m:\RR^d \rightarrow \RR^l, \|m(x) - m(x')\|^2 \le M\|x - x'\|^2, \|m(x)\| \le M[1+\|x\|]\}.
\end{align*}
Let $\mathcal{M} = \mathcal{M}_1 \times \mathcal{M}_1 \times \mathcal{M}_2$.
We then define a map $\mathcal{T}: \mathcal{M} \rightarrow C([0,T]\times \RR^d) \times C([0,T]\times \RR^d)\times C(\RR^d)$ such that, for any $(\bar m_1, \bar m_2, \bar m_3) \in \mc{M}$:
\begin{equation}\label{def:T}
 \mathcal{T}(\bar{m}_1,\bar{m}_2,\bar{m}_3)(t,x) := (m_1(t,x,\MCL(\bar{X}_t)),m_2(t,x,\MCL(\bar{X}_t,\bar{Y}_t)),m_3(x,\MCL(\bar{X}_T))),
 \end{equation}
where $(\bar{X}_t,\bar{Y}_t,\bar{Z}_t)$ is the solution to the decoupled FBSDEs:
\begin{equation}\label{def:FBSDE-m}
    \begin{dcases}
  \ud \bar{X}_t = b(t, \bar{X}_t,\bar{m}_1(t,\bar{X}_t)) \ud t + \sigma(t, \bar{X}_t) \ud W_t, \quad \bar{X}_0 = \xi,  \\
  \ud \bar{Y}_t = -h(t, \bar{\Theta}_t, \bar{m}_2(t,\bar{X}_t)) \ud t + \bar{Z}_t \ud W_t, \quad \bar{Y}_T = g(\bar{X}_T, m_3(\bar{X}_T)).
\end{dcases}
\end{equation}
That is, $\mc{T}$ maps any element in $\mc{M}$ to the functions $(m_1(t, x, \cdot), m_2(t, x, \cdot), m_3(x, \cdot))$ with $\cdot$ being the law of the FBSDE system \eqref{def:FBSDE-m}. This definition corresponds to the idea of fictitious play that sequentially updates the quantity $Q^{(k)}$ of interest. 

The next lemma shows that if $M$ is large enough, then $\mathcal{T}$ is closed in $\mathcal{M}$.
\begin{lem}\label{thm_maps}
There exists a constant $M_0 > 0$, depending only on $A, L, K$ and $T$, such that for any $M \ge M_0$,
$\mathcal{T} m \in \mathcal{M}$ for any $m \in \mathcal{M}$.
\end{lem}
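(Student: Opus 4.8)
The plan is to verify directly that each of the three components of $\mathcal{T}m$ meets the two defining constraints of $\mathcal{M}_1$ (respectively $\mathcal{M}_2$): the squared Lipschitz-in-$x$ bound with constant $M$, and the linear-growth bound with constant $M$. Since $\mathcal{T}m$ is built from the \emph{true} coefficients $m_1,m_2,m_3$ composed with the law of the decoupled system \eqref{def:FBSDE-m}, the Lipschitz-in-$x$ bounds come almost for free: freezing the measure argument at $\MCL(\bar X_t)$ (respectively $\MCL(\bar X_t,\bar Y_t)$, $\MCL(\bar X_T)$) and invoking Assumption~\ref{assumption bsde}(b) gives, e.g., $\|m_1(t,x,\MCL(\bar X_t))-m_1(t,x',\MCL(\bar X_t))\|^2\le L\|x-x'\|^2$, since the GMMD term vanishes. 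Hence each component is Lipschitz in $x$ with constant $\sqrt{L}$, and the constraint $\|\cdot\|^2\le M\|x-x'\|^2$ is met as soon as $M\ge L$.

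Next I would reduce the linear-growth requirement to a bound at $x=0$. Combining the Lipschitz-in-$x$ property with the triangle inequality, $\|\mathcal{T}m(t,x)\|\le\|\mathcal{T}m(t,0)\|+\sqrt{L}\,\|x\|$, so it suffices to show $\|\mathcal{T}m(t,0)\|\le M$ for each component, as then $\|\mathcal{T}m(t,x)\|\le M(1+\|x\|)$ whenever $\sqrt{L}\le M$. For the first component, Assumptions~\ref{assumption bsde}(b),(d) and Theorem~\ref{thm: GMMD_error} give
\begin{equation}
\|m_1(t,0,\MCL(\bar X_t))\|\le \|m_1(t,0,\delta_0)\|+\sqrt{L}\,\rmD_{\Phi}(\MCL(\bar X_t),\delta_0)\le \sqrt{K}+\sqrt{L}\,A\,\big(\EE\|\bar X_t\|^2\big)^{1/2},
\end{equation}
where I used $\rmD_{\Phi}(\mu,\delta_0)\le A\,\mathcal{W}_1(\mu,\delta_0)=A\,\EE\|\bar X_t\|\le A\,(\EE\|\bar X_t\|^2)^{1/2}$. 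The analogous bounds for the second and third components reduce, in the same way, to controlling $\EE\|\bar X_t\|^2+\EE\|\bar Y_t\|^2$ and $\EE\|\bar X_T\|^2$.

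The technical heart is therefore the a priori moment estimate for the decoupled FBSDE \eqref{def:FBSDE-m}. I would exploit that its forward equation is autonomous in $\bar X_t$: using Assumption~\ref{assumption bsde}(a),(d) together with the growth of $\bar m_1\in\mathcal{M}_1$, the drift $b(t,x,\bar m_1(t,x))$ and the diffusion $\sigma(t,x)$ satisfy linear-growth estimates, so a standard It\^o/Gronwall argument yields $\sup_{0\le t\le T}\EE\|\bar X_t\|^2\le C_X$. Feeding this into the backward equation and applying the standard BSDE a priori estimate (as in the proof of Lemma~\ref{thm: wellpose}) gives $\sup_{0\le t\le T}\EE\|\bar Y_t\|^2+\int_0^T\EE\|\bar Z_t\|_F^2\,\rmd t\le C_Y$. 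Substituting $C_X,C_Y$ into the bounds of the previous paragraph produces $\|\mathcal{T}m(t,0)\|\le \sqrt{K}+\sqrt{L}\,A\,\sqrt{C_X}$ and its counterparts, and taking $M_0$ at least $L$ and large enough to dominate these constants closes the argument.

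The step I expect to be the main obstacle is precisely this moment estimate, because $C_X$ and $C_Y$ a priori inherit a dependence on $M$: the growth of $\bar m_1\in\mathcal{M}_1$ enters the forward drift $b$, inflating its growth constant and hence the Gronwall bound. To obtain a threshold $M_0$ depending only on $A,L,K,T$, one must track this dependence carefully and verify that $\sqrt{K}+\sqrt{L}\,A\,\sqrt{C_X(M)}$ remains dominated by $M$ for all $M\ge M_0$, i.e., that $C_X(M)$ grows strictly slower than $M^2$. This is the delicate point, and it is where I would need to use \emph{both} constraints defining $\mathcal{M}_1$ — in particular the squared-Lipschitz bound $\sqrt{M}$ in tandem with the growth bound — rather than the growth bound alone, to keep the $M$-dependence of the forward moments mild enough for the self-referential growth inequality to close.
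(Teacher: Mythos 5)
Your Lipschitz half coincides with the paper's own proof, which likewise just observes that freezing the measure argument in Assumption~\ref{assumption bsde}(b) kills the GMMD term, so $M\ge L$ suffices. The genuine gap is in the linear-growth half, which you correctly identify as the technical heart but do not close --- and the route you sketch cannot close it. You reduce the problem to bounding $\sqrt{K}+\sqrt{L}\,A\,\sup_{t}\left(\EE\|\bar X_t\|^2\right)^{1/2}$ by $M$, and propose to control the moments of \eqref{def:FBSDE-m} by a ``standard It\^o/Gronwall argument.'' But the drift of \eqref{def:FBSDE-m} is $b(t,x,\bar m_1(t,x))$ with $\bar m_1\in\mathcal{M}_1$, so its growth coefficient in front of $\|x\|$ is of order $\sqrt{L}(1+M)$, and Gronwall then yields $C_X(M)\lesssim e^{c(1+M)T}$; exploiting the squared-Lipschitz constraint instead (Lipschitz constant $\sqrt{M}$) still only improves this to $e^{c\sqrt{M}\,T}$. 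In either case $\sqrt{C_X(M)}$ eventually dominates $M$, so the self-referential requirement $\sqrt{K}+\sqrt{L}\,A\,\sqrt{C_X(M)}\le M$ fails for all large $M$, and no threshold $M_0$ can be extracted this way. This is not an artifact of loose estimates: take $d=l=1$, $b(t,x,m)=m$, $\sigma\equiv 1$, $\xi=0$, $m_1(t,x,\mu)=\int \xi'\,\rmd\mu(\xi')$ (admissible once $\Phi$ contains a multiple of the identity map, as for the Barron-type classes discussed in Section~\ref{sec:GMMD}), and $\bar m_1\equiv M\in\mathcal{M}_1$. Then $\bar X_t=Mt+W_t$, and the first component of $\mathcal{T}m$ equals $\EE\bar X_t=Mt$, which at $t=T$, $x=0$ exceeds the required bound $M(1+\|x\|)=M$ whenever $T>1$, for every $M$. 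So no argument that feeds raw moment bounds of \eqref{def:FBSDE-m} back into the growth constant can establish the lemma; some smallness or weak-coupling structure must enter.

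This is exactly where the paper's proof takes a different (if terse) route: after the one-line Lipschitz observation, it obtains the linear-growth preservation ``using a similar method as in \cite[Section~4]{bender2008time},'' i.e.\ the Markovian-iteration estimates of Bender--Zhang for coupled FBSDEs. That method does not run Gronwall with the input growth constant inside the exponent; it tracks how the growth constant itself propagates through one iteration of the decoupled system and shows, under the weak-coupling/small-parameter conditions imposed there, that the output constant depends affinely on the input constant with slope strictly less than one, so that closedness (an inequality of the form $c_0+c_1M\le M$) holds for all $M$ above a fixed $M_0$ depending only on the model data. Your proposal is missing precisely this mechanism, and the repair you suggest --- using the $\sqrt{M}$-Lipschitz bound ``in tandem'' with the growth bound --- does not supply it, for the reason given above.
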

\begin{proof}

Notice that as long as $M \ge L$, the Lipschitz continuity with respect to $x$ is always kept.
By the definition of $\mathcal{T}$,  Assumption \ref{assumption bsde}(c) and inequality \eqref{Wasserstein_relation}, in order to show the H\"older continuity  of $(\mathcal{T}m_1,\mathcal{T}m_2)$ with respect to $t$, it suffices to prove that
\begin{equation*}
    \EE\|\bar{X}_t - \bar{X}_{t'}\|^2 + \EE\|\bar{Y}_t - \bar{Y}_{t'}\|^2 \le C\|t -t'\|,
\end{equation*}
which can be obtained by \cite[Theorem 5.2.2]{zhang2017backward}.
Here $C$ is a positive constant only depending on $A,L,K,T$.

 The linear growth property with respect to $x$ can be obtained using a similar method as in \cite[Section~4]{bender2008time} and we hence omit the proof.
\end{proof}

The next result focuses on how error changes in terms of the stage of fictitious play. Suppose we have a sequence of functions $(m_1^k, m_2^k, m_3^k) \in  \mathcal{M}$, $k \in \mathbb{N}$.
For each $k$,
let $(X_t^k, Y_t^k, Z_t^k) \in \mathbb{H}^2(\bR^d \times \bR^p\times \bR^{p\times q})$ solves
\begin{equation}\label{bsde:iteration}
    \begin{dcases}
  \ud X_t^k = b(t, X_t^k,m_1^k(t,X_t^k)) \ud t + \sigma(t, X_t^k) \ud W_t, \quad &X_0^k = \xi,  \\
  \ud Y_t^k = -h(t, \Theta_t^k, m_2^k(t,X_t^k)) \ud t + Z_t^k \ud W_t, \quad &Y_T^k = g(X_T^k, m_3^k(X_T^k)).
\end{dcases}
\end{equation}
The theorem below shows how $m^{k+1}_i$ and $m^k_i$, $i = 1, 2, 3$, should be related in order to achieve small errors. More discussions will be given after the proof.

\begin{thm}\label{thm_iteration}
Under Assumptions~\ref{function_class_assumption} and \ref{assumption bsde}, there exist constants $C > 0$ and $0 < q < 1$ which only depend on $A, L, K, T$ and $M$, such that
\begin{multline}\label{thm3:main_result}
    \sup_{0 \le t \le T}\big[\EE\|X_t - X_t^k\|^2 + \EE\|Y_t - Y_t^k\|^2\big]  + \int_{0}^T\EE\|Z_t - Z_t^k\|_F^2 \rmd t\\
    \le C\Big\{q^k +\sum_{j=0}^{k-1}q^{k-j}\Big[\int_{0}^T\EE\|(m_1^{j+1},m_2^{j+1})(t,X_t^{j+1}) - (\tilde{m}_1^{j},\tilde{m}_2^{j})(t,X_t^{j+1})\|^2\rmd t \\
     + \EE\|m_3^{j+1}(X_T^{j+1}) - \tilde{m}_3^j(X_T^{j+1})\|^2\Big] \Big\},
\end{multline}
where $(X_t, Y_t, Z_t)$ solves the MV-FBSDEs \eqref{def:MV-FBSDE-simple}, $(X_t^k, Y_t^k, Z_t^k)$ solves \eqref{bsde:iteration}, and \\$(\tilde{m}_1^k,\tilde{m}_2^k,\tilde{m}_3^k) \coloneqq \mathcal{T}(m_1^k,m_2^k,m_3^k)$.
\end{thm}
\begin{proof}
In the sequel, we will use $C$ to denote a positive constant depending only on $A, L, K, T$ and $M$, which may vary from line to line.

We then estimate the differences between $\Theta$ and $\Theta^k$. Recall $\tilde m_1, \tilde m_2, \tilde m_3$ from \eqref{def:mtilde}, for any given $k \ge 1$, we define $ \delta X_t^k = X_t - X_t^k$, $\delta Y_t^k = Y_t - Y_t^k$, $\delta Z_t^k = Z_t - Z_t^k$, and
\begin{align*}
    & I_1^k = \int_{0}^T\EE\|m_1^{k}(t,X_t^k) - \tilde{m}_1^{k-1}(t,X_t^k)\|^2 \rmd t,\\
    & I_2^k = \int_{0}^T\EE\|m_2^{k}(t,X_t^k) - \tilde{m}_2^{k-1}(t,X_t^k)\|^2 \rmd t,\\
    &I_3^k  = \EE\|m_3^{k}(X_T^k) - \tilde{m}_3^{k-1}(X_T^k)\|^2,\\
    &\delta h_t = h(t,\Theta_t,\tilde{m}_2(t,X_t)) - h(t,\Theta_t^k,m_2^k(t,X_t^k)). 
\end{align*}
Combining \eqref{def:MV-FBSDE-simple}, \eqref{bsde:iteration} and  \cite[ Theorem~3.2.4]{zhang2017backward}, we know that for any $t \in [0,T]$:
\begin{align}\label{thm3: eq1}
    \sup_{0\le s \le t}\EE\|\delta X_{s}^k\|^2 &\le C\int_{0}^t\EE\|b(s,X_{s}^k,\tilde{m}_1(s,X_{s}^k)) - b(s,X_{s}^k,m_1^k(s,X_{s}^k))\|^2\rmd s\\
    &\le C\int_{0}^t\EE\|\tilde{m}_1(s,X_{s}^k)-m_1^k(s,X_{s}^k)\|^2 \rmd s \notag\\
    &\le C\int_{0}^t\EE\|\tilde{m}_1(s,X_{s}^k) - \tilde{m}_1^{k-1}(s,X_{s}^k)\|^2\rmd s + CI_1^k  \notag\\
    &=C\int_{0}^t\EE\|m_1(s,X_{s}^k,\MCL(X_{s})) -m_1(s,X_{s}^k,\MCL(X_{s}^{k-1}))\|^2 \rmd s+CI_1^k  \notag\\
    &\le C\int_{0}^t\EE\rmD_{\Phi}^2(\MCL(X_{s}),\MCL(X_{s}^{k-1}))\rmd s +CI_1^k  \notag\\
    &\le C \int_{0}^t\EE\|\delta X_{s}^{k-1}\|^2 \rmd s + CI_1^k.  \notag
\end{align}
Then by induction, one has 
\begin{equation}\label{thm3: eq2}
    \int_{0}^{t}\EE\|\delta X_{s}^k\|^2\rmd s \le \frac{C^k}{k!}\int_{0}^t(t-s)^k\EE\|\delta X_{s}^0\|^2\rmd s + \sum_{j=1}^{k}\frac{(Cj)^k}{j!}I_1^{k+1-j}.
\end{equation}
Noticing that with $m_1^0 \in \mathcal{M}_1$, \cite[Theorem~3.2.2]{zhang2017backward} gives
\begin{equation*}
    \sup_{0\le t \le T}\EE\|X_t^0\|^2 \le C.
\end{equation*}
Combining it with Lemma~\ref{thm: wellpose} and the estimate \eqref{thm3: eq2} yields
\begin{equation*}
    \int_{0}^{T}\EE\|\delta X_t^k\|^2\rmd t \le \frac{(CT)^k}{k!} + \sum_{j=1}^k\frac{(CT)^j}{j!}I_1^{k+1-j}.
\end{equation*}
Noticing that there exists another constant $\tilde C > 0$ and $0 < q <1$ such that
\begin{equation*}
    \frac{(CT)^k}{k!} \le \tilde C q^k, \; \forall k \in \mathbb{N}^+,
\end{equation*}
therefore
\begin{equation*}
    \int_{0}^{T}\EE\|\delta X_t^k\|^2\rmd t \le Cq^k + C\sum_{j=0}^{k-1}q^{k-j}I_1^{j+1}.
\end{equation*}
Plugging the above inequality into \eqref{thm3: eq1} produces the $X$-part in \eqref{thm3:main_result}.
The analysis for $(Y,Z)$-part can be obtained in a similar way if one replaces \cite[Theorems~3.2.2 and 3.2.4]{zhang2017backward} in the above proof for $X$-part by \cite[Theorem~4.2.1 and 4.2.3]{zhang2017backward} for $(Y, Z)$-part.
\end{proof}

If we want to apply Theorem \ref{thm_iteration} to explain the convergence of Deep MV-FBSDE Algorithm, one issue is that in the right side of inequality \eqref{thm3:main_result} $X_t^{j+1}$ occurs while in the optimization problems \eqref{def:learn_m_nn} $X_t^j$ occurs. The following lemma shows the legitimacy of replacing $X_t^{j+1}$ by $X_t^j$.

\begin{lem}\label{lem: Girsanov}
Assume that 
\begin{equation*}
    \|\sigma(t,x)\|_S \le K, \quad \|\phi(t,x, m)\|_\infty \le K,
\end{equation*}
where $\|\cdot\|_S$ denotes the spectral norm on $\RR^{d\times q}$, $\|\cdot\|_\infty$ denotes the infinity norm on $\RR^q$, and $\phi$ satisfies that
\begin{equation*}
    b(t,x,m) = \sigma(t,x)\phi(t,x,m).
\end{equation*}

Let $\bar{m}^1$, $\bar{m}^2$ be two arbitrary functions in $\mathcal{M}_1$, and $\bar{X}^1$ and $\bar{X}^2$ be the associated solutions to the SDEs:
\begin{equation*}
    \rmd \bar{X}_t^i = b(t,\bar{X}_t^i,\bar{m}^i(t,\bar{X}_t^i))\rmd t + \sigma(t,\bar{X}_t^i)\rmd W_t, \quad \bar{X}_0^i = \xi, \quad i = 1, 2.
\end{equation*}
Let $m$, $m' \in \mathcal{M}_2$, then for any $\epsilon \in (0,1)$, there exists a constant $C(\epsilon)$, which only depends on $A,L,K,T,M$ and $\epsilon$ such that
\begin{equation*}
    \EE\|m(\bar{X}_t^1) - m'(\bar{X}_t^1)\|^2 \le C(\epsilon)\left[\EE\|m(\bar{X}_t^2) - m'(\bar{X}_t^2)\|^2\right]^\epsilon, \quad \forall t \in [0,T].
\end{equation*}
\end{lem}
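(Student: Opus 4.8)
The plan is to exploit the Girsanov theorem to relate expectations under the law of $\bar X^1_t$ to expectations under the law of $\bar X^2_t$. Since $\bar X^1$ and $\bar X^2$ satisfy SDEs driven by the \emph{same} Brownian motion with the \emph{same} diffusion coefficient $\sigma(t,x)$ and starting from the same initial condition $\xi$, and since the drifts are of the form $b(t,x,m) = \sigma(t,x)\phi(t,x,m)$ with $\phi$ bounded, the two path measures are mutually absolutely continuous. Concretely, I would write $\bar X^2$ as the solution under $\mathbb{P}$ and introduce the Radon--Nikodym density $\mathcal E_t$ of $\mathrm{Law}(\bar X^1)$ with respect to $\mathrm{Law}(\bar X^2)$ on $\mathcal F_t$, which is a Doléans-Dade exponential of a stochastic integral against $\Delta\phi := \phi(s,\cdot,\bar m^1)-\phi(s,\cdot,\bar m^2)$ (after reducing to a common reference measure). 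The boundedness $\|\phi\|_\infty \le K$ together with $\|\sigma\|_S\le K$ is exactly what guarantees the Novikov condition and, more importantly, that \emph{all} moments $\mathbb{E}[\mathcal E_t^r]$ are bounded by a constant depending only on $K,T$ and $r$.

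The key analytic step is an interpolation/Hölder argument. Writing $F := \|m(\cdot)-m'(\cdot)\|^2$, I would express
\begin{equation}
    \EE\|m(\bar X^1_t)-m'(\bar X^1_t)\|^2 = \EE\big[ F(\bar X^2_t)\,\mathcal E_t\big],
\end{equation}
and apply Hölder's inequality with conjugate exponents $\tfrac1\epsilon$ and $\tfrac1{1-\epsilon}$ to obtain
\begin{equation}
    \EE\big[F(\bar X^2_t)\mathcal E_t\big] \le \big(\EE[F(\bar X^2_t)^{1/\epsilon}]\big)^{\epsilon}\,\big(\EE[\mathcal E_t^{1/(1-\epsilon)}]\big)^{1-\epsilon}.
\end{equation}
The second factor is bounded by a constant $C(\epsilon)$ via the moment bound on $\mathcal E_t$. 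For the first factor I would use that $m,m'\in\mathcal M_2$ have linear growth, so $F(\bar X^2_t)\le C(1+\|\bar X^2_t\|)^2$; this bounds $\EE[F(\bar X^2_t)^{1/\epsilon}]$ by a constant (using standard moment bounds on the SDE solution $\bar X^2$, which has all moments finite since $\sigma,b$ are Lipschitz with linear growth). Crucially, $F(\bar X^2_t)^{1/\epsilon} = F(\bar X^2_t)\cdot F(\bar X^2_t)^{(1-\epsilon)/\epsilon}$, so I would pull out one power of $F$ and bound the remaining $F^{(1-\epsilon)/\epsilon}$ by the polynomial growth estimate, leaving a single factor of $\EE[F(\bar X^2_t)]=\EE\|m(\bar X^2_t)-m'(\bar X^2_t)\|^2$ raised to the power $\epsilon$ and absorbing everything else into $C(\epsilon)$.

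The main obstacle I anticipate is making the last maneuver rigorous: the naive Hölder split gives $(\EE[F^{1/\epsilon}])^\epsilon$, not $(\EE[F])^\epsilon$, so I must separate the \emph{size} of $F$ (which carries the small factor $\|m-m'\|^2$ we want to control) from its \emph{polynomial growth in $\bar X^2_t$} (which is harmless). The clean way is to bound $F(\bar X^2_t)^{1/\epsilon} \le \big(C(1+\|\bar X^2_t\|)^2\big)^{(1-\epsilon)/\epsilon}\,F(\bar X^2_t)$ using linear growth of $m,m'$ on one copy of the exponent, then apply Hölder a second time (or a weighted version) to decouple the $\bar X^2_t$-moment from $\EE[F(\bar X^2_t)]$. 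The boundedness of $\phi$ is what ultimately saves us from the curse of the exponent blowing up as $\epsilon\to 0$, since it controls the density moments $\EE[\mathcal E_t^{1/(1-\epsilon)}]$ uniformly on $[0,T]$; I would track carefully how $C(\epsilon)$ depends on $\epsilon$ to confirm the stated dependence on $A,L,K,T,M,\epsilon$ only.
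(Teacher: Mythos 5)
Your overall architecture---mutual absolute continuity of $\mathrm{Law}(\bar X^1)$ and $\mathrm{Law}(\bar X^2)$ via Girsanov (using $b=\sigma\phi$ with $\phi$, $\sigma$ bounded), all moments of the Radon--Nikodym density $\mathcal{E}_t$ bounded, then a H\"older interpolation---is exactly the route the paper takes: its proof is a one-line citation to a Girsanov-based result ([han2020convergence, Theorem~4]) that runs precisely this argument. The gap is in your execution of the H\"older step, and it is genuine: after writing $\EE\|m(\bar X^1_t)-m'(\bar X^1_t)\|^2=\EE[F(\bar X^2_t)\mathcal{E}_t]$ with $F:=\|m(\cdot)-m'(\cdot)\|^2$ and applying H\"older with exponents $1/\epsilon$, $1/(1-\epsilon)$, you are stuck with $(\EE[F(\bar X^2_t)^{1/\epsilon}])^{\epsilon}$. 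Bounding this factor by a constant (as in your third paragraph) only yields $\EE[F(\bar X^1_t)]\le C(\epsilon)$, which is trivially true and useless---the entire content of the lemma is that the left side is small when $\EE[F(\bar X^2_t)]$ is small. Your proposed repair---pull out $F^{(1-\epsilon)/\epsilon}\le\bigl(C(1+\|\bar X^2_t\|)^2\bigr)^{(1-\epsilon)/\epsilon}$ and ``apply H\"older a second time to decouple''---is circular: H\"older applied to $\EE[G\,F]$ with an \emph{unbounded} weight $G$ always produces $(\EE[F^{q}])^{1/q}$ for some $q>1$, never the first power $\EE[F]$, so the quantity you are trying to eliminate reappears at every iteration. (A rescue along your lines exists, but it requires truncating on $\{\|\bar X^2_t\|\le R\}$, using the Gaussian tails of $\bar X^2$, and optimizing over $R$; ``H\"older again'' alone cannot close it.) A secondary inaccuracy: you assert $\bar X^2$ has \emph{all} moments finite, but Assumption~\ref{assumption bsde}(d) only provides $\EE\|\xi\|^2\le K$, so only moments inherited from $\xi$ are available.

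The standard fix is simply to reorder the two operations: factor $F=F^{\epsilon}\cdot F^{1-\epsilon}$ \emph{before} invoking H\"older, so that the density travels with the harmless factor,
\begin{equation}
  \EE\bigl[F(\bar X^2_t)\,\mathcal{E}_t\bigr]
  =\EE\bigl[F(\bar X^2_t)^{\epsilon}\cdot F(\bar X^2_t)^{1-\epsilon}\mathcal{E}_t\bigr]
  \le\bigl(\EE[F(\bar X^2_t)]\bigr)^{\epsilon}\,\Bigl(\EE\bigl[F(\bar X^2_t)\,\mathcal{E}_t^{1/(1-\epsilon)}\bigr]\Bigr)^{1-\epsilon}.
\end{equation}
The first factor is exactly the quantity appearing in the statement. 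For the second, note that since the Girsanov kernel $\phi(t,\cdot,\bar m^1(t,\cdot))-\phi(t,\cdot,\bar m^2(t,\cdot))$ is bounded by $2K$ (up to dimensional constants), $\mathcal{E}_t^{1/(1-\epsilon)}$ equals another Dol\'eans--Dade exponential multiplied by a deterministic factor bounded by $e^{C(\epsilon)T}$; hence $\EE[F(\bar X^2_t)\mathcal{E}_t^{1/(1-\epsilon)}]$ reduces to a second-moment estimate of $\bar X^2_t$ under an equivalent measure (the drift remains of linear growth), which is finite using only $\EE\|\xi\|^2\le K$ and the quadratic growth of $F$ guaranteed by $m,m'\in\mc{M}_2$. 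This yields $\EE[F(\bar X^1_t)]\le C(\epsilon)\bigl(\EE[F(\bar X^2_t)]\bigr)^{\epsilon}$ with $C(\epsilon)$ depending only on $A,L,K,T,M,\epsilon$, uniformly in $t\in[0,T]$. With this single reordering, the rest of your outline goes through as written.
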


\begin{proof}
The proof of this lemma is based on the Girsanov theorem; See, {\it e.g.}, \cite[Theorem~4]{han2020convergence}. 
\end{proof}

We are now ready to justify the Deep MV-FBSDE Algorithm. Here we take the version using the Deep BSDE solver as an instance. Similar results can also be established for the version using the DBDP solver.

We take a sequence $\{(\hat{m}_1^k,\hat{m}_2^k,\hat{m}_3^k)\}_{k \in \mathbb{N}} \in \mathcal{M}$, in which $M$ is large enough such that the closeness of $\mathcal{T}$ in Lemma \ref{thm_maps} holds. Note that here $\hat m_i^k$ are general functions, and can be, but not limited to, the functions $\widehat m_i^k$ defined in \textbf{Step 2} of Section~\ref{sec:algorithm_outline}. Recall the notations of partition $\pi$ on the interval $[0,T]$ of size $N_T$: $0 = t_0 < t_1 < \cdots < t_i < \cdots < t_{N_T}$ with $\Delta t_i = t_{i+1} - t_i$ and $\Delta W_{t_i} = W_{t_{i+1}} - W_{t_i}$.  Let $\{(u^k,v^k_i)\}_{k \in \mathbb{N}, 0 \leq i \leq N_T-1}$ be a sequence of functions such that  $u^k \in C(\bR^d;\bR^p)$ and $ v^k_i \in C(\bR^d;\bR^{p \times q})$ for any $k \in \mathbb{N}$ and $0 \le i \le N_T-1$, we define
\begin{equation}\label{eq_bsde_iteration}
\begin{cases}
    \tilde{X}_{t_{i+1}}^{k,\pi} = \tilde{X}_{t_i}^{k,\pi} +b(t_i,\tilde{X}_{t_i}^{k,\pi}, \hat{m}_1^k(t_i,\tilde{X}_{t_i}^{k,\pi}))\Delta t_i + \sigma(t_i,\tilde{X}_{t_i}^{k,\pi})\Delta W_{t_i},\\
    \tilde{Y}_{t_{i+1}}^{k,\pi} = \tilde{Y}_{t_i}^{k,\pi} -h(t_i,\tilde{\Theta}_{t_i}^{k,\pi}, \hat{m}_2^k(t_i,\tilde{X}_{t_i}^{k,\pi}))\Delta t_i + v^k_i(\tilde{X}_{t_i}^{k,\pi})\Delta W_{t_i},
\end{cases}
\end{equation}
with initial conditions $\tilde{X}_0^{k,\pi} = \xi$ and $\tilde{Y}_0^{k,\pi} = u^k(\tilde{X}_0^{k,\pi})$. The next theorem gives the convergence of the Deep MV-FBSDE Algorithm with the Deep BSDE solver.

\begin{thm}\label{main_thm}
Under Assumptions~\ref{function_class_assumption} and \ref{assumption bsde}, for any $\epsilon \in (0,1)$, there exist a constant $q \in (0,1)$ which only depend on $A, L, K, T$ and $M$ and $C(\epsilon) > 0$ which depend on $A, L,K,T,M$ and $\epsilon$, such that
\begin{align}
    \sup_{0 \le t \le T}&[\EE\|X_t - \tilde{X}_{\pi(t)}^{k,\pi}\|^2 + \EE\|Y_t - \tilde{Y}_{\pi(t)}^{k,\pi}\|^2] + \int_{0}^{T}\EE\|Z_t - v_i^k(\tilde{X}_{\pi(t)}^{k,\pi})\|_F^2\rmd t \notag\\
    &\le C(\epsilon)\Big\{q^k + \|\pi\| + \frac{1}{N}
    +  \EE\|\tilde{Y}_T^{k,\pi} - g(\tilde{X}_T^{k,\pi}, \hat{m}_3^k(\tilde{X}_T^{k,\pi}))\|^2 \notag\\
    & \quad  + \sum_{j=0}^{k-1}q^{k-j}\EE\|\tilde{Y}_T^{j,\pi} - g(\tilde{X}_T^{j,\pi}, \hat{m}_3^j(\tilde{X}_T^{j,\pi}))\|^2 \notag\\
 &   \quad + 
    \sum_{j=0}^{k-1}q^{k-j}\Big[ \EE\|\hat{m}_3^{j+1}(\tilde{X}_T^{j,\pi}) - m_3(\tilde{X}_T^{j,\pi},\mu_T^{j})\|^2 \notag\\
    & \quad +\sum_{i = 0}^{N_T - 1}q^{k-j}
    \EE\|\hat{m}^{j+1}_1(t_i,\tilde{X}_{t_i}^{j,\pi}) - m_1(t_i,\tilde{X}_{t_i}^{j,\pi}, \mu_{t_i}^j)\|^2\Delta t_i\Big], \notag\\
     & \quad +\sum_{i = 0}^{N_T - 1}q^{k-j}
    \EE\|\hat{m}^{j+1}_2(t_i,\tilde{X}_{t_i}^{j,\pi}) - m_2(t_i,\tilde{X}_{t_i}^{j,\pi}, \nu_{t_i}^j)\|^2\Delta t_i\Big] \Big\}^\epsilon,
   \label{thm_main:main}
\end{align}
in which $\|\pi\| := \max_{0 \le i \le N_T-1}(t_{i+1}-t_i)$, $\pi(t) = t_i$ if $t \in [t_i,t_{i+1})$ and $ \{\mu_{t_i}^k\}_{i=0}^{N_T}$ and $\{\nu_{t_i}^k\}_{i=0}^{N_T}$ are defined as the empirical distribution of $\tilde{X}_{t_i}^{k,\pi}$ and $(\tilde{X}_{t_i}^{k,\pi},\tilde{Y}_{t_i}^{k,\pi})$:
\begin{equation*}
    \mu_{t_i}^k = \frac{1}{N}\sum_{n=1}^{N}\delta_{\tilde{X}_{t_i}^{k,\pi,n}}, \;  \nu_{t_i}^k = \frac{1}{N}\sum_{n=1}^{N}\delta_{(\tilde{X}_{t_i}^{k,\pi,n},\tilde{Y}_{t_i}^{k,\pi,n})}, 
\end{equation*}
where 
\begin{align*}
  &\tilde{X}_{t_{i+1}}^{k,\pi,n} = \tilde{X}_{t_i}^{k,\pi,n} +b(t_i,\tilde{X}_{t_i}^{k,\pi,n}, \hat{m}_1^k(t_i,\tilde{X}_{t_i}^{k,\pi,n}))\Delta t_i + \sigma(t_i,\tilde{X}_{t_i}^{k,\pi,n})\Delta W_{t_i}^n, \; \tilde{X}_{0}^{k,\pi,n}  = \xi^n,  \\
  &\tilde{Y}_{t_{i+1}}^{k,\pi,n} = \tilde{Y}_{t_i}^{k,\pi,n}\hspace{-2pt} -\hspace{-2pt}h(t_i,\tilde{\Theta}_{t_i}^{k,\pi,n}, \hat{m}_2^k(t_i,\tilde{X}_{t_i}^{k,\pi,n}))\Delta t_i + v_i^k(\tilde{X}_{t_i}^{k,\pi,n})\Delta W_{t_i}^n, \tilde{Y}_{0}^{k,\pi,n}  = u^k(\xi^n),
\end{align*}
$\{W_t^n\}_{n=1}^N$ are i.i.d. Brownian motion in $\bR^q$ and $\{\xi^n\}_{n=1}^N$ are i.i.d. copies of $\xi$.

Let $(\tilde{X}_t^k,\tilde{Y}_t^k,\tilde{Z}_t^k)$ be the solution of 
\begin{equation}\label{def:FBSDE-mk}
    \begin{dcases}
  \ud \tilde{X}_t^k = b(t, \tilde{X}_t^k,\hat{m}_1^k(t,\tilde{X}_t^k)) \ud t + \sigma(t, \tilde{X}^k_t) \ud W_t, \quad \tilde{X}_0^k = \xi,  \\
  \ud \tilde{Y}^k_t = -h(t, \tilde{\Theta}_t^k, \hat{m}_2^k(t,\tilde{X}_t^k)) \ud t + \tilde{Z}^k_t \ud W_t, \quad \tilde{Y}^k_T = g(\tilde{X}^k_T, \hat{m}_3^k(\tilde{X}^k_T)),
\end{dcases}
\end{equation}
$\mathcal{N}'$ and $\{\mathcal{N}_i\}_{i=0}^{N_T-1}$ be any subsets of $C(\bR^d;\bR^p)$ and $C(\bR^d;\bR^{p\times q})$, respectively. Then, for any $k \in \mathbb{N}$,
\begin{multline}
     \inf_{u^k \in \mathcal{N}', v_i^k \in \mathcal{N}_i} \EE \|\tilde{Y}_T^{k,\pi} - g(\tilde{X}_T^{k,\pi},\hat{m}_3^k(\tilde{X}_T^{k,\pi}))\|^2 \\
    \le C\Big\{ \|\pi\|+ \inf_{u^k \in \mathcal{N}'}\EE\|\tilde{Y}_0^k - u^k(\tilde{X}_0^{k,\pi})\|^2 
    + \sum_{i=0}^{N_T - 1}\inf_{v_i^k \in \mathcal{N}_{i}}\EE\|\tilde{Z}_{t_i}^{k,\pi}- v_i^k(\tilde{X}_{t_i}^{k,\pi})\|^2\Big\},\label{bsde_error_upper}
\end{multline}
where $\tilde{Z}_{t_i}^{k,\pi} = (\Delta t_i)^{-1}\EE[\int_{t_i}^{t_{i+1}}\tilde{Z}_t^k\rmd t|\tilde{X}_{t_i}^{k,\pi}]$.

Finally, for any $k \in \mathbb{N}$,
\begin{align}
    \inf_{(\hat{m}_1^k,\hat{m}_2^k,\hat{m}_3^k) \in  \mathcal{M}}\Big\{&\sum_{i = 0}^{N_T - 1}
    \EE\|\hat{m}^{k+1}_1(t_i,\tilde{X}_{t_i}^{k,\pi}) - m_1(t_i,\tilde{X}_{t_i}^{k,\pi}, \mu_{t_i}^k)\|^2\Delta t_i \notag\\
    & + \sum_{i = 0}^{N_T - 1}
    \EE\|\hat{m}^{k+1}_2(t_i,\tilde{X}_{t_i}^{k,\pi}) - m_2(t_i,\tilde{X}_{t_i}^{k,\pi}, \nu_{t_i}^k)\|^2\Delta t_i \notag\\
    & +\EE\|\hat{m}_3^{k+1}(\tilde{X}_T^{k,\pi}) - m_3(\tilde{X}_T^{k,\pi},\mu_T^{k})\|^2\Big\} \notag\\
    & \qquad \qquad \le C\left[\frac{1}{N}+\|\pi\| + \EE\|\tilde{Y}_T^{k,\pi} - g(\tilde{X}_T^{k,\pi}, \hat{m}_3^k(\tilde{X}_T^{k,\pi}))\|^2\right].\label{learn_error_upper}
\end{align}
\end{thm}
\begin{proof}
Combining Assumptions~\ref{function_class_assumption} and \ref{assumption bsde}, the fact that $(\hat{m}_1^k,\hat{m}_2^k,\hat{m}_3^k) \in \mathcal{M}$ and \cite[Theorem~1 and 2]{han2020convergence1}, we obtain \eqref{bsde_error_upper} and
\begin{multline}
     \sup_{0 \le t \le T}[\EE\|\tilde{X}_t^k - \tilde{X}_{\pi(t)}^{k,\pi}\|^2  + \EE\|\tilde{Y}_t^k - \tilde{Y}_{\pi(t)}^{k,\pi}\|^2] + \int_{0}^{T}\EE\|\tilde{Z}_t^k - v_i^k(\tilde{X}_{\pi(t)}^{k,\pi})\|_F^2\rmd t \\
    \le C[\|\pi\| +  \EE\|\tilde{Y}_T^{k,\pi} - g(\tilde{X}_T^{k,\pi}, \hat{m}_3^k(\tilde{X}_T^{k,\pi}))\|^2]\label{bsde_error_lower}.
\end{multline}
Applying Theorem~\ref{thm_iteration} to $(\tilde{X}_t^k,\tilde{Y}_t^k,\tilde{Z}_t^k)$, we have
\begin{multline}
    \sup_{0 \le t \le T}\big[\EE\|X_t - \tilde{X}_t^k\|^2 + \EE\|Y_t - \tilde{Y}_t^k\|^2\big]  + \int_{0}^T\EE\|Z_t - \tilde{Z}_t^k\|_F^2 \rmd t \\
    \le C\Big\{q^k +\sum_{j=0}^{k-1}q^{k-j}\Big[\int_{0}^T\Big(\EE\|\hat{m}_1^{j+1}(t,\tilde{X}_t^{j+1}) - m_1(t,\tilde{X}_t^{j+1},\MCL(\tilde{X}_t^j))\|^2\\\quad\quad\quad\quad\quad\quad\quad\quad\quad\quad\quad\quad\quad\quad+ \EE\|\hat{m}_2^{j+1}(t,\tilde{X}_t^{j+1}) - m_2(t,\tilde{X}_t^{j+1},\MCL(\tilde{X}_t^j,\tilde{Y}_t^j))\|^2\Big)\rmd t \\
    + \EE\|\hat{m}_3^{j+1}(\tilde{X}_T^{j+1}) - m_3(\tilde{X}_T^{j+1},\MCL(\tilde{X}_T^j))\|^2\Big] \Big\}.
\end{multline}
Using Theorem~\ref{thm_maps}, we know that 
\begin{equation}
    (m_1(t,x,\MCL(\tilde{X}_t^k)),m_2(t,x,\MCL(\tilde{X}_t^k.\tilde{Y}_t^k)),m_3(x,\MCL(\tilde{X}_T^k)) \in \mathcal{M}.\label{regularity_m}
\end{equation}
Then Lemma~\ref{lem: Girsanov} gives that
\begin{multline}\label{thm_main:eq1}
    \sup_{0 \le t \le T}\big[\EE\|X_t - \tilde{X}_t^k\|^2 + \EE\|Y_t - \tilde{Y}_t^k\|^2\big]  + \int_{0}^T\EE\|Z_t - \tilde{Z}_t^k\|_F^2 \rmd t\\
    \le C\Big\{q^k +\sum_{j=0}^{k-1}q^{k-j}\Big[\int_{0}^T\Big(\EE\|\hat{m}_1^{j+1}(t,\tilde{X}_t^{j}) - m_1(t,\tilde{X}_t^{j},\MCL(\tilde{X}_t^j))\|^2\\ +\EE\|\hat{m}_2^{j+1}(t,\tilde{X}_t^{j}) - m_2(t,\tilde{X}_t^{j},\MCL(\tilde{X}_t^j,\tilde{Y}_t^j))\|^2\Big)\rmd t 
     \\
     + \EE\|\hat{m}_3^{j}(\tilde{X}_T^{j}) - m_3(\tilde{X}_T^{j},\MCL(\tilde{X}_T^j))\|^2\Big] \Big\}^\epsilon.
\end{multline}
Combining with inequality \eqref{bsde_error_lower} and Theorem \ref{thm: GMMD_error}, we know that for any $k \in \mathbb{N}$,
\begin{multline}
    \EE \rmD_\Phi^2(\MCL(X_t^k), \mu_{\pi(t)}^k) + \EE \rmD_{\Phi'}^2(\MCL(X_t^k,Y_t^k), \nu_{\pi(t)}^k) \\
    \le C\Big[\|\pi\| + \frac{1}{N} + \EE\|\tilde{Y}_T^{k,\pi} - g(\tilde{X}_T^{k,\pi}, \hat{m}_3^k(\tilde{X}_T^{k,\pi}))\|^2\Big].\label{gmmd_error}
\end{multline}
Combining the last inequality with inequalities \eqref{bsde_error_lower} and \eqref{thm_main:eq1}, we have deduced inequality \eqref{thm_main:main}.

Finally, we prove the inequality \eqref{learn_error_upper}. Recalling \eqref{regularity_m}, it suffices to show
\begin{align*}
    &\sum_{i = 0}^{N_T - 1}\Big(
    \EE\|m_1(t_i,\tilde{X}_{t_i}^{k,\pi},\MCL(\tilde{X}_{t_i}^k)) - m_1(t_i,\tilde{X}_{t_i}^{k,\pi}, \mu_{t_i}^k)\|^2 \\
    & \qquad \qquad + \EE\|m_2(t_i,\tilde{X}_{t_i}^{k,\pi},\MCL(\tilde{X}_{t_i}^k,\tilde{Y}_{t_i}^k)) - m_1(t_i,\tilde{X}_{t_i}^{k,\pi}, \nu_{t_i}^k)\|^2\Big)\Delta t_i \\ 
    &  +\EE\|m_3(\tilde{X}_T^{k,\pi},\MCL(\tilde{X}_T^k)) - m_3(\tilde{X}_T^{k,\pi},\mu_T^{k})\|^2 \\
    & \qquad \qquad \qquad 
     \le C\Big[\frac{1}{N}+\|\pi\| + \EE\|\tilde{Y}_T^{k,\pi} - g(\tilde{X}_T^{k,\pi}, \hat{m}_3^k(\tilde{X}_T^{k,\pi}))\|^2\Big],
\end{align*}
which follows inequality \eqref{gmmd_error}. Thus we've obtained inequality \eqref{learn_error_upper}.
\end{proof}

Theorem \ref{main_thm} shows that if we further require $(\mathfrak{m}_1,\mathfrak{m}_2,\mathfrak{m}_3) \in \mathcal{M}$ in the optimization problems \eqref{def:learn_m_nn},
the Deep MV-FBSDE algorithm will converge. Specifically, the inequality \eqref{thm_main:main} shows that the distance between the true solution of the MV-FBSDE \eqref{def:MV-FBSDE-simple} and the output of the Deep MV-FBSDE algorithm can be controlled by the mesh size, the number of samples to approximate the distribution, the loss functions for the supervised learning problems \eqref{def:learn_m_nn} and the Deep BSDE method \eqref{eq:deepBSDE-goal} at all the previous stages and an exponential decay term with respect to the number of iterations. Inequality \eqref{bsde_error_upper} is a direct application of Theorem 2 in \cite{han2020convergence1}, which shows that the loss function of the Deep BSDE method at each stage can be small if the approximation capacity of the parametric function spaces ($\mathcal{N}'$ and $\{\mathcal{N}_i\}_{i=0}^{N_T-1})$ is large. Finally, inequality \eqref{learn_error_upper} shows that the loss function for the supervised learning can be small if the loss function of the Deep BSDE method at the last stage is small. In practice, to ensure that the optimized neural networks $\mathfrak{m}_1, \mathfrak{m}_2, \mathfrak{m}_3$ satisfy the Lipschitz constraints, we can use the method in \cite{fazlyab2019efficient} to obtain an efficient and accurate estimation of Lipschitz constants for neural networks. If needed, we can also use various techniques proposed in recent literature (see, {\it e.g.},~\cite{arjovsky17wgan,gulrajani2017improved,finlay2018lipschitz,fazlyab2019efficient,pauli2021training}) to regularize the Lipschitz constants during the optimization.

\section{Numerical Results}\label{sec:numerics}

The algorithm is implemented in Python using the machine learning library TensorFlow. The code can be found in the public GitHub repository\footnote{\url{https://github.com/frankhan91/DeepMVBSDE}}, and thus the results presented here can be straightforwardly reproduced and further developed.

\subsection{A Benchmark Example with Explicit Solution}\label{sec:numerics-benchmark}

We first consider the following MV-FBSDEs for $(X_t, Y_t, Z_t)$:
\begin{equation}\label{eq:exam1}
{\small    \begin{dcases}
    \mathrm{d} X_t^i = \left[\mathrm{sin}(\tilde\EE_{x_t'\sim\mu_t} e^{-\frac{\|X_t- x_t'\|^2}{d}} - e^{-\frac{\|X_t\|^2}{d+2t}}\Big(\frac{d}{d+2t}\Big)^\frac{d}{2})+ \frac12(m_t^Y - \mathrm{sin}(t) e^{-\frac{t}{2}}) \right]\rmd t + \rmd W_t^i, \\ 
    \mathrm{d} Y_t = \left[\frac{Z_t^1 + \dots + Z_t^d}{\sqrt{d}}- \frac{Y_t}{2}+ \sqrt{Y_t^2 + \|Z_t\|^2 + 1}-\sqrt{2}\right]\rmd t + Z_t\rmd W_t, \\ 
    \end{dcases}}
\end{equation}
with initial and terminal conditions $X_0^i = 0$ and $Y_T = \mathrm{sin}\Big(T + \frac{X_T^1 + \dots+ X_T^d}{\sqrt{d}}\Big)$,
where $X_t^i$ denotes the $i^{th}$ entry of the $d$-dimensional forward process $X_t$, $\mu_t = \mc{L}(X_t)$,  $m_t^Y = \EE[Y_t]$, $p = 1$ and $q= d$. The expectation $\tilde \EE$ is with respect to $x_t' \sim \mu_t$ only.
It can be verified straightforwardly that
\begin{equation*}
    X_t = W_t, \quad  Y_t = \sin\Big(t + \frac{X_t^1 + \dots +X_t^d}{\sqrt{d}}\Big), \quad  Z_t^i = \frac{1}{\sqrt d}\cos\Big(t + \frac{X_t^1 + \dots +X_t^d}{\sqrt{d}}\Big),
\end{equation*}
is the solution of the above MV-FBSDEs. Here the distribution-dependent functions are defined as
\begin{align*}
     &m_1(t, x, \MCL(\Theta_t))) = \tilde \EE_{x'\sim \mc{L}(X_t)} e^{-\frac{\|x- x'\|^2}{d}} + \frac12 m_t^Y, \\
     &m_2 \equiv 0, \quad m_3 \equiv 0.
\end{align*}
We first apply the proposed Deep MV-FBSDE algorithm to solve this problem when the dimension $d$ equals $5$ and $10$, and $T=0.5$. We have tested both the Deep BSDE solver and the DBDP solver for this problem. We discretize the time interval into 20 equal subintervals and simulate $N=1500$ SDE paths to approximate the state distribution $\mathcal{L}(\Theta_t)$. The fictitious play is run for $30$ stages in total. We use feedforward neural networks with two hidden layers of width 24 and Rectified Linear Unit (ReLU) activation function to approximate the unknown functions in the BSDE solver and $\mathfrak{m}_1$ when approximating the distribution dependence. 

\begin{figure}[!htb]
\centering
\subfloat[$d=5$]{\includegraphics[width=0.9\textwidth]{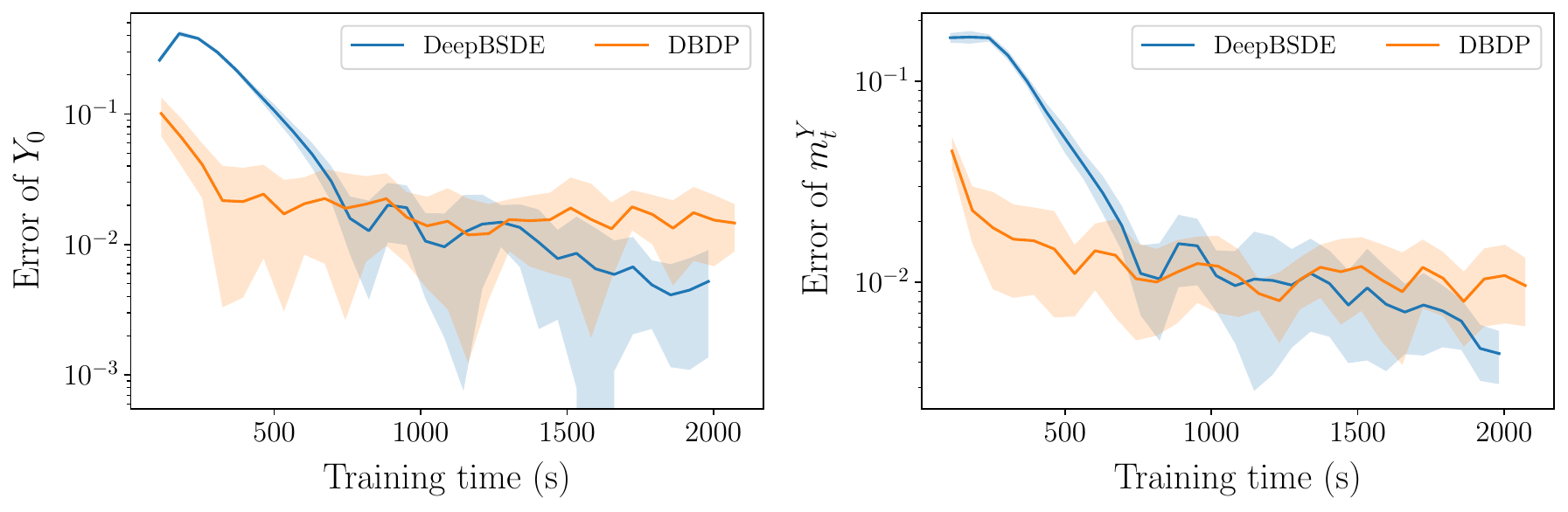}}\\
\subfloat[$d=10$]{\includegraphics[width=0.9\textwidth]{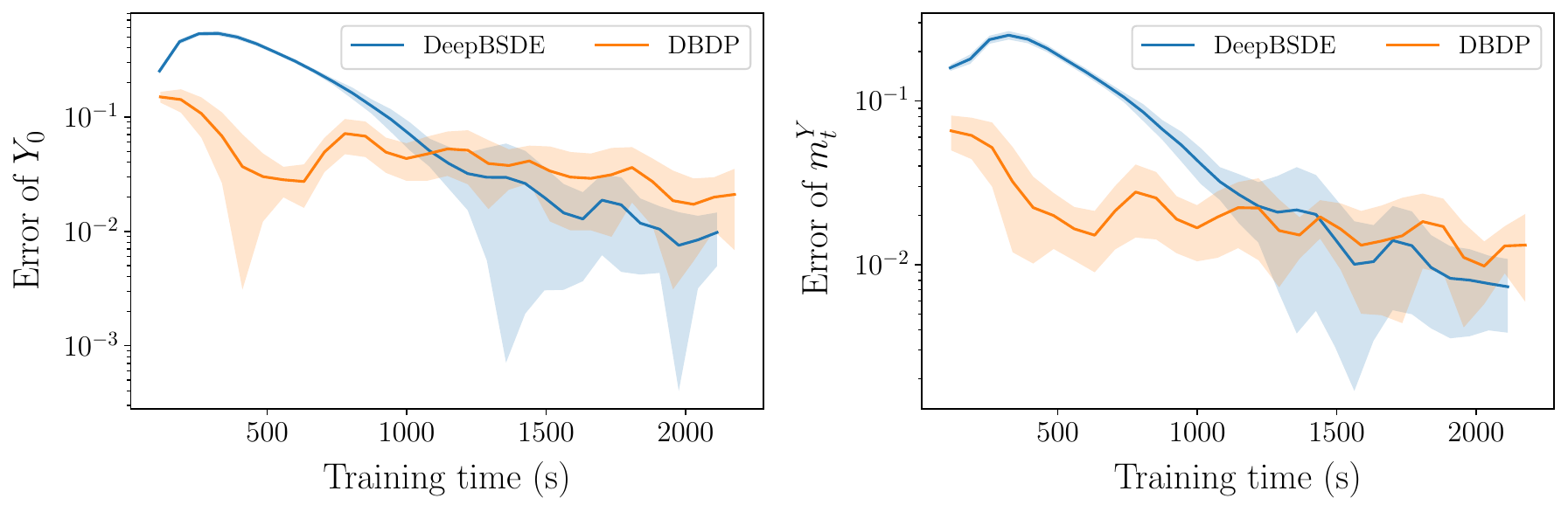}}
  \caption{The absolute error of $Y_0$ (left) and $L^2$ error of $m_t^Y$ (right) for  equation~\eqref{eq:exam1} along the training process, in the case of $d=5$ (top) and $d=10$ (bottom). The shaded area depicts the mean $\pm$ the standard deviation computed from 5 independent runs.
  \label{fig:sinebm_err}
  }
\end{figure}

The training curves of absolute error of $Y_0$ and $L^2$ error of $m_t^Y$ versus training time (implemented on a Macbook Pro with a 2.40 Gigahertz Intel Core i9 processor) are shown in Figure~\ref{fig:sinebm_err}. Using the Deep BSDE method, the algorithm finally finds the solution whose absolute error of $Y_0$ and $L^2$ error of $m_t^Y$ are 0.54\% and 0.43\% when $d=5$ and 1.01\% and 0.80\% when $d=10$, respectively. Using the DBDP method, the algorithm finally finds the solution whose absolute error of $Y_0$ and $L^2$ error of $m_t^Y$ are 1.60\% and 0.92\% when $d=5$ and 1.75\% and 1.09\% when $d=10$, respectively. The results corroborate our convergence analysis that the proposed algorithm can find accurate solutions in high dimensions with either BSDE solver. From Figure~\ref{fig:sinebm_err} and the final accuracy, we can see that the Deep BSDE solver performs slightly better than the DBDP solver when applied to this problem, consistent with the results reported in~\cite{germain2019numerical}.

We further investigate the numerical error of the Deep MV-FBSDE algorithm with varying dimensions $d$, using the Deep BSDE method as a representative BSDE solver. The dimensions considered are $d=5,8,10,12,15$. Two important parameters are increased linearly with $d$. Specifically, the number of SDE paths $N$ is set at (500, 800, 1000, 1200, 1500) for the corresponding values of $d$, and the network widths are set at (12, 18, 24, 30, 36), respectively. In contrast, other settings are kept constant across dimensions: our network architecture consists of feedforward neural networks with two hidden layers and a ReLU activation function, the number of fictitious stages remains fixed at 80, and the learning rate schedule is unchanged.
Figure~\ref{fig:dim_scaling} (the left panel) illustrates the error bars (computed from 5 independent runs) for the final error of $Y_0$ across different dimensions. For the problem~\eqref{eq:exam1}, as the dimension increases from 5 to 15, we maintain an approximate solution for $Y_0$ with an absolute error below $2\%$. This is accomplished with a reasonably modest increase in computational cost. Figure~\ref{fig:dim_scaling} (the right panel) depicts the evolution of the error during fictitious play. As expected, convergence requires more fictitious plays as the dimension increases, but the increase is not dramatic.
These results suggest that the proposed Deep MV-FBSDE algorithm exhibits favorable scaling with respect to problem dimensions, standing in stark contrast to traditional numerical methods that are susceptible to the curse of dimensionality.

\begin{figure}[!htb]
\centering
{\includegraphics[width=0.9\textwidth]{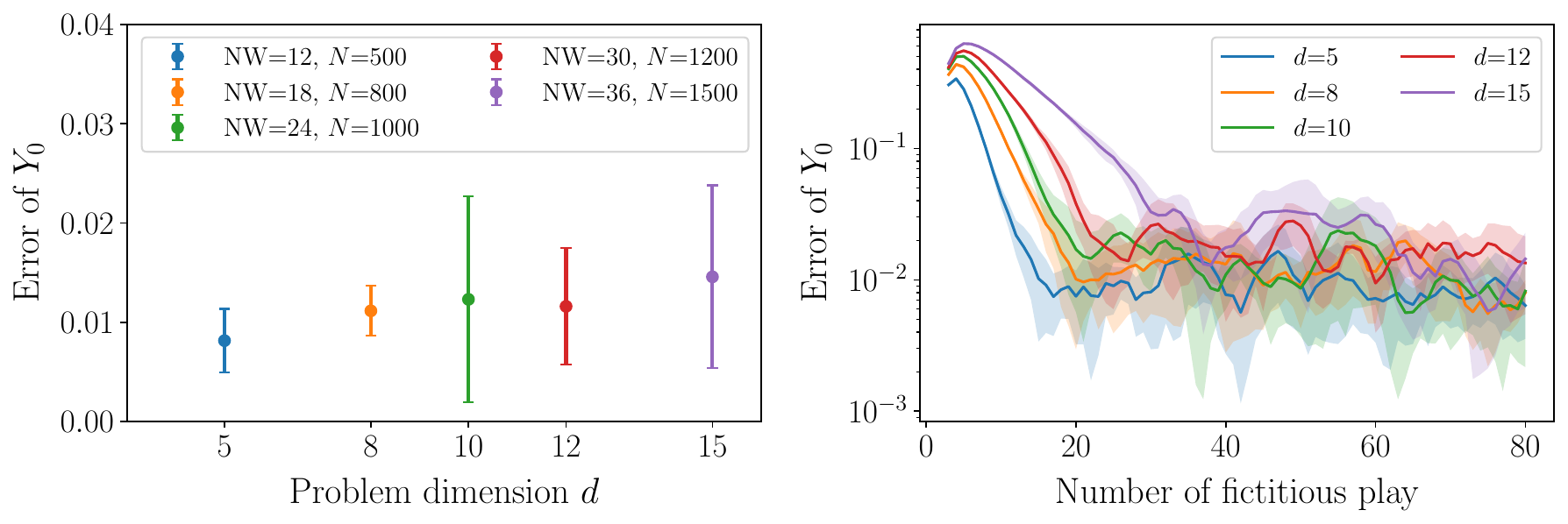}}\\
  \caption{The absolute error of $Y_0$ in solving equation~\eqref{eq:exam1} using the Deep BSDE method across different dimensions $d=(5,8,10,12,15)$. The network width (NW) and the number of particles $N$ increase linearly as $d$ increases.  The left panel shows error bars for the final errors, while the right panel portrays the error evolution during fictitious play. The shaded area depicts the mean $\pm$ the standard deviation computed from 5 independent runs.
  \label{fig:dim_scaling}
  }
\end{figure}

\subsection{A Mean-field Game of Cucker-Smale Flocking Model}
In this subsection, we consider a mean-field problem of Cucker-Smale \cite{cucker2007mathematics} flocking model motivated from \cite{nourian2010synthesis,nourian2011mean}. The Cucker-Smale model plays an important role in modeling and analyzing the flocking behaviors, which refers to the collective motion by a group of self-propelled entities such as birds, fish, bacteria, and insects. When the number of entities is very large, it is natural to consider the mean-field limit system. Below, we consider a mean-field game version of the Cucker-Smale model where every agent chooses her acceleration to minimize the cost of acceleration and their location and velocity misalignments.

We model a representative agent's dynamics by
\begin{equation*}
    \begin{dcases}
      \ud x_t = v_t \ud t, \\
      \ud v_t = u_t \ud t + C \ud W_t,
    \end{dcases}
\end{equation*}
where $x_t \in \RR^n$ and $v_t \in \RR^n$ are time-$t$ position and velocity vectors of the agent, $u_t \in \RR^n$ is the acceleration control input, and $W$ is a $p$-dimensional Brownian motion. The agent aims to minimize
\begin{equation}\label{eq:CSflock_cost}
     \EE \int_0^T \|u_t\|_R^2 +  \mathcal{C}(x_t, v_t; f_t)  \ud t,
\end{equation}
over possible $u_t$, where the first term is the cost of acceleration, and the second term $\mathcal{C}(x_t, v_t; f_t)$ describes the agent's position and velocity's misalignment from a given distribution $f_t$:
\begin{multline*}
    \mathcal{C}(x,v; f) = \left\Vert\int_{\RR^{2n}} w(\|x - x'\|)(v' - v) f(x', v') \ud x' \ud v'\right\Vert_Q^2 \\
    = \left\Vert \tilde \EE_{(x',v')\sim f} [w(\|x - x'\|)(v' - v)]\right\Vert_Q^2.
\end{multline*}
Here $w(x):= \frac{1}{(1+ x^2)^\beta}\,(\beta\geq0)$, the expectation $\tilde \EE$ is with respect to the distribution $f$, and $Q, R$ are symmetric positive definite matrices with compatible dimensions such that $\|x\|_Q := (x\transpose Qx)^{1/2}$. In order to find the equilibrium of the mean-field game, one needs to find the optimal $\hat u_t$ such that $f_t$ is the density of the associated optimal position and velocity $(\hat x_t, \hat v_t)$. We remark that our setting is a stochastic MFG on the finite horizon $[0,T]$, while in \cite{nourian2010synthesis,nourian2011mean}, the authors formulate it as an ergodic mean-field control problem. The infinite horizon MFG was studied in \cite{perrin2021mean} via reinforcement deep learning, deterministic MFGs of Cucker-Smale model on finite horizon was also studied in the past \cite{santambrogio2021cucker,bardi2021convergence}.

\subsubsection{The Reformulation through MV-FBSDEs}

We can use the stochastic maximum principle \cite[Section~3.3.2]{CaDe1:17} to characterize the mean-field equilibrium through MV-FBSDEs. To this end, let us define the Hamiltonian $H$ by
\begin{equation*}
    H(t, x, v, f, y, u) = (v\transpose, u\transpose) y +\mathcal{C}(x, v; f)  + \|u\|_R^2,
\end{equation*}
where $y \in \RR^{2n}$. Denote by $y_{1:n}$ the first half entries of this vector and by $y_{n+1:2n}$ the second half, then the unique minimizer of $H$ is given by:
\begin{equation*}
    \hat u = -\half R\inv y_{n+1:2n}. 
\end{equation*}
With straightforward computation, the derivatives of $H$ with respect to the state variables $(x, v)$ are:
\begin{align*}
    \partial_x H & = \partial_x \mathcal{C}(x, v; f) \\
    & = 2\tilde \EE_{(x',v') \sim f}[\partial_xw(\|x - x'\|)(v' - v)]\transpose Q \tilde \EE_{(x',v')\sim f} [w(\|x - x'\|)(v' - v)], \\
    \partial_v H & = y_{1:n} + \partial_v \mathcal{C}(x, v; f) \\
    & = y_{1:n} + 2Q \tilde \EE_{(x',v')\sim f} [w(\|x - x'\|)(v' - v)] \tilde \EE_{(x',v') \sim f}[-w(\|x - x'\|)],
\end{align*}
where $\partial_xw(\|x - x'\|)(v' - v)$ is understood as a Jacobian matrix
$$
\begin{bmatrix}
 \frac{\partial w(\|x-x'\|)}{\partial x_1}(v_1' - v_1) & \cdots &  \frac{\partial w(\|x-x'\|)}{\partial x_n}(v_1' - v_1) \\
\vdots & \cdots & \vdots \\
 \frac{\partial w(\|x-x'\|)}{\partial x_1}(v_n' - v_n) & \cdots &  \frac{\partial w(\|x-x'\|)}{\partial x_n}(v_n' - v_n)
\end{bmatrix},
$$
and $x_i$, $v_i$ denotes the $i^{th}$ entry of $x$, $v$, respectively.

By the stochastic maximum principle, $(\hat u_t, \hat f_t)_{0 \leq t \leq T}$ is an MFG equilibrium if and only if $\hat u_t = -\half R\inv Y_t^2$, $\hat f_t = \mc{L}( x_t, v_t)$ and $(x_t, v_t,  Y_t,  Z_t)$ solves the MV-FBSDEs
\begin{equation}
\label{eq:CSFlocking}
\begin{dcases}
    \ud x_t = v_t \ud t, \quad \ud v_t = -\half R\inv Y_t^2 \ud t + C \ud W_t, \quad &(x_0, v_0) = \xi,  \\
   \ud Y_t = -\begin{pmatrix}\partial_x H\\ \partial_v H \end{pmatrix}(t, x_t, v_t, \mc{L}(x_t, v_t), Y_t, \hat u_t) \ud t + Z_t \ud W_t, \quad &Y_T = 0,
    \end{dcases}
\end{equation}
where $Y_t \in \RR^{2n}$ is the backward process with $Y_t^1 \in \RR^n, Y_t^2 \in \RR^n$ be the first half and second half entries,  and $Z_t = \begin{pmatrix}Z_t^1 \\ Z_t^2\end{pmatrix} \in \RR^{2n\times p}$ is the adjoint process with $Z_t^1$ and $Z_t^2$ being $\RR^{n\times p}$-valued. Therefore the distribution dependence functions are identified as
\begin{align*}
&m_1 \equiv 0, \quad m_3 \equiv 0,\\
& m_2(t, x_t, v_t, \mc{L}(x_t, v_t)) = \left[
\begin{aligned}
&\tilde\EE[\partial_xw(\|x_t - x'_t\|)(v'_t - v_t)]\transpose Q \tilde \EE [w(\|x_t - x'_t\|)(v'_t - v_t)]\\
&\tilde \EE [w(\|x_t - x'_t\|)(v'_t - v_t)] \tilde \EE[-w(\|x_t - x'_t\|)]
\end{aligned}
\right],
\end{align*}
where $(x_t', v_t')$ is a copy of $(x_t, v_t)$ and $\tilde \EE$ is with respect to $(x_t', v_t')$ distributed according to $\mc{L}(x_t, v_t)$.

The system~\eqref{eq:CSFlocking} hardly admits an explicit solution in general.
A special case is when $\beta= 0$, $w(x) \equiv 1$, and the problem reduces to a linear-quadratic mean-field game in which the mean-field cost-coupling function becomes
\begin{equation*}
    \mathcal{C}(x,v; f) = \left\Vert \tilde \EE_{(x',v')\sim f} [v' - v]\right\Vert_Q^2 = \left\Vert\EE[v] - v\right\Vert_Q^2.
\end{equation*}
The derivatives of Hamiltonian are then simplified to
\begin{equation*}
    \partial_x H = 0, \quad \partial_v H = y_{1:n} + 2Q(v - \tilde \EE_{(x', v')\sim f}[v']) = y_{1:n} + 2Q(v - \EE[v]).
\end{equation*}
Now the FBSDEs derived from the stochastic maximum principle become
\begin{equation}\label{eq:CSFlocking_special}
\begin{dcases}
    \ud x_t = v_t \ud t, \quad  \ud v_t = -\half R\inv Y_t^2 \ud t + C \ud W_t, \quad &(x_0, v_0) = \xi,  \\
   \ud Y_t = -\begin{pmatrix}0\\ Y_t^1 + 2Q(v_t - \EE[v_t]) \end{pmatrix} \ud t + Z_t \ud W_t, \quad & Y_T = 0,
    \end{dcases}
\end{equation}
where one immediately deduces $Y_t^1 \equiv Z_t^1 \equiv 0$. We then only need to solve the forward-backward system of $(v_t, Y_t^2)$ only. Let the ansatz of $Y_t^2$ be 
\begin{equation*}
    Y_t^2 = \eta(t) v_t + \chi(t), \quad \eta(T) = \chi(T) = 0,
\end{equation*}
where $\eta(t) \in \RR^{n \times n}$ and $\chi(t) \in \RR^n$. Plugging it back to \eqref{eq:CSFlocking_special} gives the following system:
\begin{equation*}
    \begin{dcases}
    \dot\eta(t) - \half \eta(t) R\inv \eta(t) + 2Q = 0, \quad \eta(T) = 0, \\
    \dot \chi(t) - \half \eta(t) R \inv \chi(t) - 2Q\EE[v_0] = 0, \quad \chi(T) = 0.
    \end{dcases}
\end{equation*}
One can also deduce that $Z_t^2 = \eta(t)C$, $\EE[Y_t^2] \equiv 0$ and $\EE[v_t] \equiv \EE[v_0]$. The solution to the above analytical matrix Riccati equation serves as the benchmark solution to the experiment below when $\beta=0$.

\subsubsection{Numerical Results}
We use the proposed method to solve the problem numerically with $n=p = 3$, and choose $Q=I_3$, $R=0.5 I_3$, $C=0.1 I_3$, $T=1$, and $\beta=0$, $0.1$, $0.2$, $0.3$, where $I_3$ denotes the identity matrix of size 3. Thus, the solution to \eqref{eq:CSFlocking} is of dimension $\RR^6 \times \RR^6 \times \RR^{6\times 3}$. We assume $x_0 \sim \mathcal{N}(0, I_3)$, $v_0 \sim \mathcal{N}(1, I_3)$. We discretize the time interval $[0, 1]$ into 50 equal subintervals and simulate $N=1000$ SDE paths to approximate the state distribution $\mathcal{L}(x_t, v_t)$. We use feedforward neural networks with two hidden layers of width 48 and Rectified Linear Unit (ReLU) activation function to approximate the unknown functions in the BSDE solver and $\mathfrak{m}_2$ when approximating the distribution dependence. The fictitious play is run for $60$ stages and the total runtime is 474 seconds (implemented on a Macbook Pro with a 2.40 Gigahertz Intel Core i9 processor). The numerical solution of $\beta = 0$ is very close to the analytical solution, with an $L^2$ error of $Y_0^2$ being $0.017$. 

In the left and middle panels of Figure~\ref{fig:flock_state}, we plot the density of the final position $x_T$ and the final velocity $v_T$. Since we choose the parameter such that the problem is symmetric with three dimensions, we only present the first entry of $x_T$ and $v_T$. In the right panel of Figure~\ref{fig:flock_state}, we present the standard deviation of $v_t$'s first entry as a function of time $t$ with different $\beta$. From all three subplots in Figure~\ref{fig:flock_state} we can see that, when $\beta=0$, the analytical solution and our numerical solution agree very well with each other. 
We also observe that the standard deviation of $v$ increases as $\beta$ increases. This qualitative trend is consistent with our intuition since the cost of ``misalignment" of the location-velocity decreases as $\beta$ increases. This intuition is further confirmed in Figure~\ref{fig:flock_control}, in which we plot the density of the control $u_t$'s first entry at $t=0.0$, $0.3$, $0.6$. It is mathematically clear that as $\beta$ increases, the second cost term $\mathcal{C}(x_t, v_t; f_t)$ in equation~\eqref{eq:CSflock_cost} punishes the location-velocity ``misalignment" less severe.  Thus the agents will not spend as much effort (large acceleration $u_t$) as they would for smaller $\beta$, as they also want to minimize first cost term $\|u\|^2_R$ in \eqref{eq:CSflock_cost}. As a result, the density of $u_t$ is more centered at $0$ for large $\beta$.

\begin{figure}[!htb]
\centering
{\includegraphics[width=1.0\textwidth]{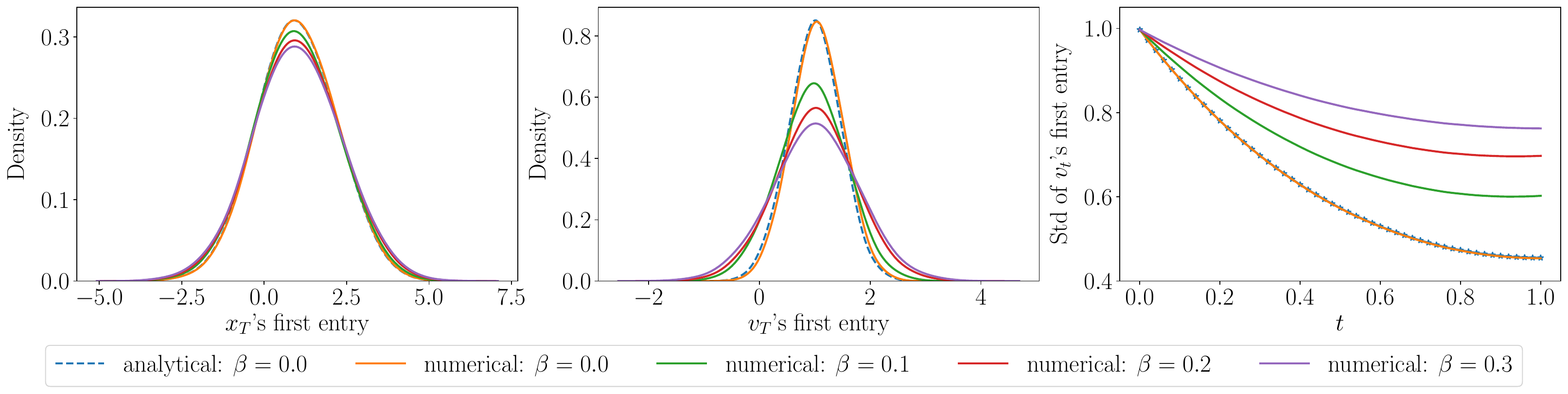}}
  \caption{Numerical solution of the states to the Cucker-Smale MFG~\eqref{eq:CSFlocking}. Left: the density of the position $x_T$'s first entry with different $\beta$. Middle: the density of the velocity $v_T$'s first entry with different $\beta$. Right: the standard deviation of $v_t$'s first entry as a function of time with different $\beta$. 
  \label{fig:flock_state}
  }
\end{figure}

\begin{figure}[!htb]
\centering
{\includegraphics[width=1.0\textwidth]{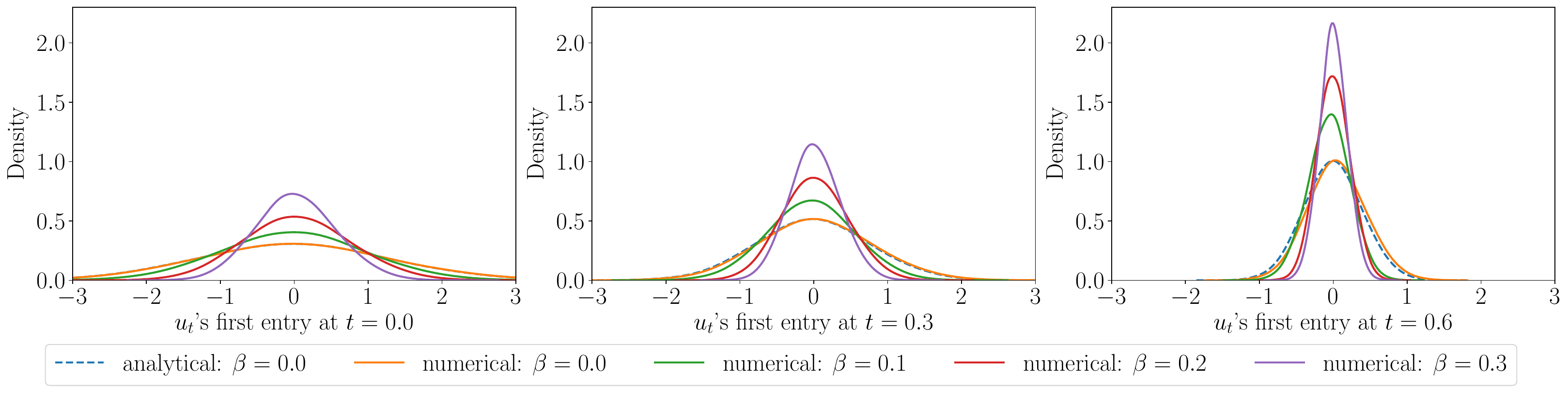}}
  \caption{Numerical solution of the optimal control to the Cucker-Smale MFG~\eqref{eq:CSFlocking}. From left to right: the density of $u_t$'s first entry with different $\beta$ at different time $t=0.0$, $0.3$, $0.6$.
  \label{fig:flock_control}
  }
\end{figure}

\section{Conclusion}\label{sec:conclusion}
Built on deep learning BSDE solvers, we propose a novel deep learning method for solving the McKean-Vlasov forward-backward stochastic differential equations, which have been frequently used in mean-field control problems and mean-field games. Specifically, we can compute the general cases when the mean-field interactions depend on full distributions, not only on expectation or other moments. We prove that the convergence of the proposed method is free of the curse of dimensionality by using a class of integral probability metrics \cite{han2021class}. The proved theorem shows the advantage of the method in high dimensions. We then derived a mean-field game of the well-known Cucker-Smale model, with the cost depending on the full distribution of the forward process. The numerical results are nicely in line with the model's intuition. In the future, we plan to study numerical algorithms for MV-FBSDEs where the dependence of $\mc{L}(X_t)$ is not by the integral form, but via $f(X_t)$ where $f(\cdot)$ is the density or cumulative distribution function of the forward process $X_t$. This is motivated by the famous beach towel mean-field game \cite{lions2007theorie}. Another plan is to extend the algorithm to MV-FBSDEs in a random environment, corresponding to mean-field control and game problems with a common noise. 

\section*{Acknowledgments}
R.H. was partially supported by the NSF grant DMS-1953035, the Faculty Career Development Award, the Research Assistance Program Award, the Early Career Faculty Acceleration funding and the Regents' Junior Faculty Fellowship at the University of California, Santa Barbara, and a grant from the Simons Foundation (MP-TSM-00002783).

\bibliographystyle{plain}
\bibliography{Reference}

\end{document}